\def\ps@pprintTitle{%
 \let\@oddhead\@empty
 \let\@evenhead\@empty
 \def\@oddfoot{\centerline{\thepage}}%
 \let\@evenfoot\@oddfoot}
\newtheorem{theorem}{Theorem}
\newtheorem{proposition}{\textbf{Proposition}}[section]
\newtheorem{assumption}[proposition]{\textbf{Assumption}}
\newtheorem{lemma}[proposition]{\textbf{Lemma}}
\theoremstyle{remark}
\newtheorem{remark}[proposition]{Remark}                                            
\begin{document}
\begin{frontmatter}
\title{ Convergence properties of a Gauss-Newton
data-assimilation method}
\author[add1]{Nazanin Abedini}
\ead{n.abedini@vu.nl}
\author[add1]{Svetlana Dubinkina}
\ead{s.b.dubinkina@vu.nl}
\date{} 
\address[add1]{VU Amsterdam, Department of Mathematics, De Boelelaan 1111, 1081 HV Amsterdam, The Netherlands}


\begin{abstract}
Four-dimensional weak-constraint variational data assimilation estimates a state given partial noisy observations and dynamical model by minimizing 
a cost function that takes into account both discrepancy between the state and observations and model error over time. It can be formulated as a Gauss-Newton iteration of an associated least-squares problem. 
In this paper, we introduce a parameter in front of the observation mismatch and show analytically that this parameter is crucial either for convergence to the true solution when observations are noise-free or for boundness of the error when 
observations are noisy with bounded observation noise. We also consider joint state-parameter estimation. We illustrated theoretical results with numerical experiments using the Lorenz 63 and Lorenz 96 models.
\end{abstract}
\end{frontmatter}

\section{INTRODUCTION}
Data assimilation (DA) estimates a state of a dynamical model given partial noisy measurements (also called observations), e.g.~\cite{Jaz70}. A so-called variational DA minimizes a cost function that is a difference between an estimation and the observations provided that either the estimate is a solution of the dynamical model (strong-constraint variational DA, see e.g. \cite{lewis1985use, talagrand1997assimilation, talagrand1987variational, sasaki1970some}) or the dynamical model equations are satisfied under model error (weak-constraint variational DA, see e.g.~\cite{tr2006accounting, tremolet2007model}). 

Variational DA can be viewed as an approach of combining model and observations based on Tikhonov-Philips regularization, which is used to solve ill-posed inverse problems \cite{johnson2005very}. Inverse problems appear in diverse fields such as geosciences and image reconstruction \cite{lorenc2000met, nichols2003data, engl2000inverse, kaipio2006statistical}. Inverse problems are concerned with seeking a (stationary) solution of a mathematical model given a set of noisy and incomplete observations.  Due to sparsity of observations, the corresponding discrete inverse problem has a highly ill-conditioned coefficient matrix. In order to obtain a stable solution to an ill-posed inverse problem, regularization methods are required. 

Motivated by the weak-constraint variational DA, we consider a DA method that minimizes a cost function under assumption of model error. We are seeking a solution over a time window at once akin to the four-dimensional variational DA (WC4DVar). The main difference to WC4DVar is that the cost function we consider has a parameter in front of the observation mismatch. As we will show, this parameter plays a significant role in the convergence of the DA method and has to be chosen carefully in order to achieve either convergence or boundedness of the error. As in \cite{brett2013accuracy} we use the dynamical model to regularize the least-squares minimization problem. 

There are several research papers investigating error propagation of variational data assimilation. The error is defined as a norm of the difference between the DA estimate and the true trajectory, from which the observations are generated.
In \cite{hayden2011discrete}, the authors considered the Lorenz 63 model and the Navier-Stocks equations and showed that the error converges to zero asymptotically in time for noise-free observations. In \cite{brett2013accuracy,MLPL2013}, it was shown that for a nonlinear dynamical model the error is bounded under assumption of contractivity in the high modes and bounded observation noise. We do not assume contractivity but assume bounds on some operators. The other difference in our analysis compared to the existing analysis of variational DA is that we derive convergence and bounds on the error as iteration goes to infinity and not as the time goes to infinity. The DA method considered in this paper is based on a Gauss-Newton iteration of the least-squares minimization problem, e.g. \cite{gratton2007approximate, cartis2021convergent}, which is was also considered for incremental four-dimensional DA \cite{courtier1994strategy} in \cite{lawless2005investigation, lawless2005approximate}.
 
The paper is organised as follows. In Section \ref{GNsec}, we describe the DA minimization problem and introduce a Gauss-Newton DA iteration to solve it. In Section \ref{erran}, we derive an error convergence result for noise-free observations and an error boundess result for noisy observations with bounded noise.
In Section \ref{param}, we extend the DA method to joint state-parameter estimation.
In Section \ref{sec5}, we illustrate the theoretical results with numercilac experiments using the Lorenz 63 and Lorenz 96 models. Finally, we present our conclusions in Section \ref{conc}.

\section{GAUSS-NEWTON DATA-ASSIMILATION METHOD}\label{GNsec}
Let us consider the following nonlinear dynamical model 
\begin{equation}\label{non_lin}
\frac{du^{\dagger}}{dt} = f(u^{\dagger}), \quad u^{\dagger}(t) \in \mathbb{R}^n, \ t\in[0,T], 
\end{equation}
where $f:\mathbb{R}^n\to\mathbb{R}^n$. Since in many applications the model is defined by the time-discretization, we consider data assimilation in the context of a discrete deterministic model. Let $0=t_0<t_1<\dots<t_{N-1}=T$ be an equidistant partition of $I=[0,T]$ with $t_j=j \Delta t$, then the time-discretization of Equation (\ref{non_lin}) is 
\begin{equation}\label{eq1}
u^{\dagger}_{j+1} = F_j(u^{\dagger}_j), \quad u^{\dagger}_j\in \mathbb{R}^n, \quad j=0,\dots,N-1,
\end{equation}
where $u^{\dagger}_{j+1}=u^{\dagger}(t_{j+1})$ and $F_j$ is twice continuously differentiable $\forall j=0,\cdots,N$. Let us denote $\mathbf{u}^{\dagger}=\{u^{\dagger}_0,\dots,u^{\dagger}_{N}\}$ the true solution of the model and assumed to be unknown. Suppose a sequence of noisy observations  $\mathbf{y} := \{y_{k_0},\dots,y_{k_M}\}$, where $k_0\geq 0$ and $k_M\leq N$, related to $\mathbf{u}^{\dagger}$ is given as
\begin{equation}\label{eq2}
y_{j} = \mathcal{H}_j u^{\dagger}_{j}+\eta_{j}, \quad y_j \in \mathbb{R}^b, \quad j=k_0,\dots,k_M,
\end{equation}
where $\mathcal{H}_j:\mathbb{R}^n\to\mathbb{R}^b$, $b\le n$, is the linear observation operator, and the observation noise $\eta_{j}$, either determenistic or stochastic, is bounded. The goal of data assimilation is to find a state $\mathbf{u}=\{u_0,\cdots,u_N\}$ such that the distance between the state $\mathbf{u}$ and the observation $\mathbf{y}$ is minimized. The weak-constraint variational data assimilation (WC4DVar) \cite{tr2006accounting, talagrand1997assimilation} minimizes the distance to the observations $\mathbf{y}$ under the condition that the estimate is a solution of the dynamical model (Equation \eqref{eq1}) under model error. Namely, WC4DVar solves the following minimization problem
\begin{eqnarray*}
	\min_{\mathbf{u}\in \mathbb{R}^{n N}} \frac{1}{2}\{\|G(\mathbf{u})\|^2 + \|\mathbf{y}-H\mathbf{u}\|^2\}, 
\end{eqnarray*}
where 
\begin{equation}\label{eq4}
 G(\mathbf{u}) = \left(G_0(\mathbf{u})\ G_1(\mathbf{u})\ \cdots\ G_{N-1}(\mathbf{u})
\right)^T, \quad
G_j(\mathbf{u}) = u_{j+1}-F_j(u_j), \quad j=0,\dots,N-1,
\end{equation}
and
\[
H = \left(\mathcal{H}_{k_0}\ \cdots\ \mathcal{H}_{k_M}\right)^T.
\]
In this paper, we abuse the notation of $L^2$-norm in $\mathbb{R}^s$ for different values of dimension $s$.
In WC4DVar the norms are typically weighted by the error covariance matrices. We  consider a similar minimization problem with the sole difference of a parameter $\alpha$ in front of the observation mismatch, namely
\begin{equation}\label{eq5}
	\min_{\mathbf{u}\in \mathbb{R}^{n N}} \frac{1}{2}\{\|G(\mathbf{u})\|^2 + \alpha \|\mathbf{y}-H\mathbf{u}\|^2\}.
\end{equation}
	As we already mentioned in introduction, the parameter $\alpha$ is crucial for the convergence and bound result on error.

We minimize the cost function (Equation (\ref{eq5})) as follows. We start with an initial guess $\mathbf{u}^{(0)} = H^T \mathbf{y} + (I-H^TH) \mathbf{u}^b$, where $\mathbf{y}$ is the observation and $\mathbf{u}^b$ is the background solution, which is usually a forecast state from the previous analysis cycle. Then the method proceeds by the iteration
\begin{equation}\label{eq6}
\mathbf{u}^{(k+1)} = \mathbf{u}^{(k)} -\left(G'(\mathbf{u}^{(k)})^{T} G'(\mathbf{u}^{(k)})+\alpha H^{T}H\right)^{-1} \left(G'(\mathbf{u}^{(k)})^{T}G(\mathbf{u}^{(k)}) + \alpha H^{T}(H\mathbf{u}^{(k)}-\mathbf{y})\right),
\end{equation}
where $k$ denotes the index of the Gauss-Newton's iteration, $G(\mathbf{u}^{(k)})$ defined in Equation (\ref{eq4}), and $G'(\mathbf{u}^{(k)})$ is Jacobian of $G$ which has an $n(N-1)\times nN$ block structure:
\begin{equation}\label{eq7}
G'(\mathbf{u}) = 
\begin{bmatrix}
-F_0^{'}(u_0) & I &\\
 & -F_1^{'}(u_1) & I &\\
  & & \ddots &  \ddots & \\
& & & -F_{N-1}^{'}(u_{N-1}) & I \\
\end{bmatrix}.
\end{equation}
\section{ERROR ANALYSIS}\label{erran}
We define the error between approximation $\mathbf{u}$ of the Gauss-Newton DA method (Equation (\ref{eq6})) and the true solution $\mathbf{u}^{\dagger}$ by
\begin{equation}\label{err_def}
    \mathbf{e}_k := \mathbf{u}^{(k)}-\mathbf{u}^{\dagger}, \quad \forall k=0,1,\dots.
\end{equation}
We show that the norm of error (Equation (\ref{err_def})) either converges to zero or is bounded, dependent whether the observations are noise-free or noisy.

\subsection{\textbf{Noise-free observations}}
We assume that the sequence of observations $\mathbf{y}$ is noise-free, thus $\eta_j=0, \ j=0,\cdots,N$ in Equation (\ref{eq2}). We show that the Gauss-Newton DA method (Equation (\ref{eq6})) produces an accurate state estimation under the vanishing noise assumption. In order to do this, we need the following assumptions on $G'$.
\begin{assumption}\label{ass1}
The Jacobian  $G'(\mathbf{u})$ defined in Equation (\ref{eq7}) is globally Lipschitz continuous 
with the Lipschitz constant denoted by $L_1>0$, namely 
\begin{equation}\label{eqq8}
\|G'(\mathbf{u}) - G'(\mathbf{v})\| \le L_1 \|\mathbf{u}-\mathbf{v}\|,\quad \forall \mathbf{u},\mathbf{v} \in \mathbb{R}^{n N},
\end{equation}
and there exists $\alpha>0$ such that 
\begin{equation}\label{cond1}
	\|(G'(\mathbf{x})^TG'(\mathbf{x})+\alpha H^TH)^{-1}G'^{T}(\mathbf{x})\|\le (L_1 c)^{-1}, \quad \forall \mathbf{x}\in \mathbb{R}^{n N},
	\end{equation}
	with $L_1$ being the Lipschitz constant from Equation (\ref{eqq8}) and $c>0$ being an upper bound on the norm of initial error $\|\mathbf{e}_0\|$.
\end{assumption}
We abuse the notation by denoting both vector and matrix norm by $\|\cdot\|$.
\begin{theorem}\label{th1}
	Let Assumption (\ref{ass1}) hold and let observations (Equation (\ref{eq2})) to be noise-free.
	Then norm of the error defined in Equation (\ref{err_def}) converges to zero as iteration goes to infinity.
\end{theorem}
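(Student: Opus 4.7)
The plan is to set up a one-step error recursion and show that the noise-free assumption, combined with the structure of the Gauss–Newton update, forces the recursion to be quadratic in $\|\mathbf{e}_k\|$. Two simple observations drive the proof: with $\eta_j\equiv 0$ the observation identity gives $\mathbf{y}=H\mathbf{u}^{\dagger}$, so $H\mathbf{u}^{(k)}-\mathbf{y}=H\mathbf{e}_k$; and because $\mathbf{u}^{\dagger}$ solves the discrete model exactly, $G(\mathbf{u}^{\dagger})=0$.

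Next I would expand $G(\mathbf{u}^{(k)})$ about $\mathbf{u}^{\dagger}$ using the fundamental theorem of calculus, writing
\[
G(\mathbf{u}^{(k)})=G(\mathbf{u}^{(k)})-G(\mathbf{u}^{\dagger})=\int_{0}^{1} G'\!\left(\mathbf{u}^{\dagger}+s\mathbf{e}_k\right)\mathbf{e}_k\,ds,
\]
and rearranging as
\[
G(\mathbf{u}^{(k)})=G'(\mathbf{u}^{(k)})\mathbf{e}_k - R_k,\qquad R_k:=\int_{0}^{1}\bigl(G'(\mathbf{u}^{(k)})-G'(\mathbf{u}^{\dagger}+s\mathbf{e}_k)\bigr)\mathbf{e}_k\,ds.
\]
The Lipschitz bound of Assumption \ref{ass1} then yields $\|R_k\|\le \tfrac{L_1}{2}\|\mathbf{e}_k\|^2$.

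The key algebraic step is to substitute this expansion, together with $H\mathbf{u}^{(k)}-\mathbf{y}=H\mathbf{e}_k$, into the Gauss–Newton update (\ref{eq6}). The vector inside the inverse becomes $\bigl(G'(\mathbf{u}^{(k)})^{T}G'(\mathbf{u}^{(k)})+\alpha H^{T}H\bigr)\mathbf{e}_k - G'(\mathbf{u}^{(k)})^{T}R_k$, and after subtracting $\mathbf{u}^{(k)}-\mathbf{u}^{\dagger}=\mathbf{e}_k$ the leading term cancels exactly. This is the place where the $\alpha H^{T}H$ contribution in the Hessian approximation must align with the observation-mismatch term, and it explains why that matching coefficient $\alpha$ is essential. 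One obtains the clean recursion
\[
\mathbf{e}_{k+1}=\bigl(G'(\mathbf{u}^{(k)})^{T}G'(\mathbf{u}^{(k)})+\alpha H^{T}H\bigr)^{-1}G'(\mathbf{u}^{(k)})^{T}R_k.
\]

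Finally, I would apply the bound (\ref{cond1}) from Assumption \ref{ass1} and the estimate on $R_k$ to get
\[
\|\mathbf{e}_{k+1}\|\le (L_1 c)^{-1}\cdot\tfrac{L_1}{2}\|\mathbf{e}_k\|^2=\tfrac{1}{2c}\|\mathbf{e}_k\|^2,
\]
which is the classical quadratic-convergence inequality for Gauss–Newton. An easy induction using $\|\mathbf{e}_0\|\le c$ shows $\|\mathbf{e}_k\|\le c\,2^{-(2^{k}-1)}$, so $\|\mathbf{e}_k\|\to 0$ as $k\to\infty$. The main obstacle is not analytic difficulty but the conceptual step of recognising that the cancellation in the recursion requires the observation block inside the Gauss–Newton matrix to share the factor $\alpha$ with the observation residual, justifying the appearance of $\alpha$ in the bound (\ref{cond1}) rather than a separate constant.
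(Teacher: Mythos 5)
Your proposal is correct and follows essentially the same route as the paper's proof: both reduce the Gauss--Newton update to the quadratic recursion $\|\mathbf{e}_{k+1}\|\le \tfrac{1}{2c}\|\mathbf{e}_k\|^2$ via the integral mean-value expansion of $G(\mathbf{u}^{(k)})-G(\mathbf{u}^{\dagger})$, the Lipschitz bound on $G'$, and Equation (\ref{cond1}), then conclude by induction using $\|\mathbf{e}_0\|\le c$. The only difference is cosmetic: you substitute the Taylor remainder directly into the update, whereas the paper first applies the identity $I-(G_k'^{T}G_k'+\alpha H^{T}H)^{-1}\alpha H^{T}H=(G_k'^{T}G_k'+\alpha H^{T}H)^{-1}G_k'^{T}G_k'$ and then expands.
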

\begin{proof}
For the sake of simplicity, we use $G_k$ and $G^{'}_k$ instead of $G(\mathbf{u}^{(k)})$ and $G'(\mathbf{u}^{(k)})$, respectively. Substituting Equation (\ref{eq6}) into Equation (\ref{err_def}) we have
	\begin{eqnarray*}
		\mathbf{e}_{k+1} &=& \mathbf{u}^{(k+1)} - \mathbf{u}^{\dagger}\\\nonumber
 &=& \mathbf{u}^{(k)} -(G_k^{'T}G^{'}_k+\alpha H^{T}H)^{-1}\left(G_k^{'T}G_k-\alpha H^{T}(\mathbf{y}-H\mathbf{u}^{(k)})\right) -\mathbf{u}^{\dagger}.            \end{eqnarray*}                                         By substituting Equation (\ref{eq2}) with $\eta_j=0, \ \forall  j=0,\cdots,N$ into the expression above, we obtain the following
	\begin{eqnarray*}\label{prop_th1}
		\mathbf{e}_{k+1}&=&\mathbf{e}_k - (G_k^{'T}G'_k+\alpha H^TH)^{-1}(G_k^{'T}G_k+\alpha H^TH \mathbf{e}_k).
			\end{eqnarray*}
Opening the brackets and using the following property 
\begin{eqnarray*}
I-(G_k^{'T}G'_k+\alpha H^T H)^{-1}\alpha H^TH = (G_k^{'T}G'_k+\alpha H^TH)^{-1}G_k^{'T}G'_k,
\end{eqnarray*}
we get                                                     
\begin{eqnarray*}\label{eq8}                 
\mathbf{e}_{k+1}=(G_k^{'T}G'_k+\alpha H^T H)^{-1}\left(G_k^{'T}(G'_k \mathbf{e}_k-G_k)\right).
\end{eqnarray*}
 Since  $G(\mathbf{u}^{\dagger})=0$,  we add it to the right-hand side and use the mean value theorem to obtain the following:
\begin{align*}
	\mathbf{e}_{k+1} &= (G_k^{'T}G'_k+\alpha H^T H)^{-1}(G_k^{'T} G'_k \mathbf{e}_k - G_k^{'T}(G(\mathbf{u}^{(k)})-G(\mathbf{u}^{\dagger}))\notag\\
	&=(G_k^{'T}G'_k+\alpha H^T H)^{-1} G_k^{'T} G'_k \mathbf{e}_k\notag\\
	&- (G_k^{'T}G'_k+\alpha H^T H)^{-1} G_k^{'T} \left( \int_0^1 G'(s\mathbf{u}^{(k)}+(1-s)\mathbf{u}^{\dagger}) ds \right)\mathbf{e}_k\notag\\
	&= (G_k^{'T}G'_k+\alpha H^T H)^{-1} G_k^{'T} \left( G'_k - \int_0^1 G'(s\mathbf{u}^{(k)}+(1-s)\mathbf{u}^{\dagger}) ds \right)\mathbf{e}_k\notag\\
	&=(G_k^{'T}G'_k+\alpha H^T H)^{-1} G_k^{'T} \left(\int_0^1 \left(G'(\mathbf{u}^{(k)}) - G'\left(s\mathbf{u}^{(k)}+(1-s)\mathbf{u}^{\dagger}\right) \right)\mathbf{e}_k ds\right).
\end{align*}
Taking norm of both sides and using Lipschitz continuity (Equation (\ref{eqq8})) on $G'$, we get
\[\|\mathbf{e}_{k+1} \| \le \frac{L_1}{2} \big\lVert (G_k^{'T}G'_k+\alpha H^{T}H)^{-1} G_k^{'T}\big\rVert \|\mathbf{e}_k\|^2.\]
Using  Equation (\ref{cond1}), we conclude that
\begin{eqnarray*}
\|\mathbf{e}_{k+1}\| \le \frac{1}{2 c} \|\mathbf{e}_k\|^2.
\end{eqnarray*}
Therefore, for $k=1,2,\dots$, we have 
\[\|\mathbf{e}_k\| \le \left(\frac{1}{2 c} \right)^{2^k -1} \|\mathbf{e}_0\|^{2^k}.\]
Since $\|\mathbf{e}_0\|\le c$, we get 
\begin{eqnarray*}\label{eq_star}
\|\mathbf{e}_k\| &\le&  \left(\frac{1}{2}\right)^{2^k-1} c,
\end{eqnarray*}
which leads to $\lim_{k\to\infty} \|\mathbf{e}_k\|=0$.
\end{proof}

\subsection{\textbf{Noisy observations}}
Next we consider noisy observations. We show that norm of the error defined in Equation (\ref{err_def}) is bounded. In order to prove this result, we require local conditions on $G'$ and bounded observation noise.
\begin{assumption}\label{ass_noisy1}
 $G$ is continuously differentiable in the open convex set $D\subset \mathbb{R}^{nN}$.
The Jacobian  $G'(\mathbf{u})$ defined in Equation (\ref{eq7}) is locally Lipschitz continuous in $D$, with the Lipschitz constant denoted by $L_2>0$, namely
\begin{equation}\label{Lip_noisy}
\|G'(\mathbf{u}) - G'(\mathbf{v})\| \le L_2 \|\mathbf{u}-\mathbf{v}\|, \quad \forall \mathbf{u},\mathbf{v} \in D,
\end{equation}
and there exists $0<\alpha<1$ such that 
\begin{align}\label{eq1_ass2}
  &\|(G'(\mathbf{x})^TG'(\mathbf{x})+\alpha H^TH)^{-1}G'^{T}(\mathbf{x})\|\le (L_2 c)^{-1}, \quad  \forall \mathbf{x}\in D,\\
   &\|H^T \eta\|\|(G'(\mathbf{x})^{T}G'(\mathbf{x})+\alpha H^TH)^{-1}\| \le c/2, \quad 	 \forall \mathbf{x}\in D,\label{eq2_ass2}
\end{align}
with $L_2$ being the Lipschitz constant from Equation (\ref{Lip_noisy}) and $c>0$ being an upper bound on the norm of initial error $\|\mathbf{e}_0\|$.
\end{assumption}
\begin{lemma}\label{lemma1}
(Dennis and Robert \cite{dennis1996numerical}) Let $G:\mathbb{R}^l\to\mathbb{R}^m$ be continuously differentiable in the open convex set $D\subset \mathbb{R}^l$ and let the Jacobian of $G$ be Lipschitz continuous at $\mathbf{x} \in D$, using a vector norm and the induced matrix operator norm and the Lipschitz constant $L$. Then, for any $\mathbf{x+p}\in D$,
\begin{equation*}\label{lem-eq}
    \|G(\mathbf{x+p})-G(\mathbf{x})-G'(\mathbf{x})\mathbf{p}\| \le \frac{L}{2}\|\mathbf{p}\|^2.
\end{equation*}
\end{lemma}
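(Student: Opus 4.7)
The plan is to exploit the fundamental theorem of calculus along the line segment between $\mathbf{x}$ and $\mathbf{x}+\mathbf{p}$, which is entirely contained in $D$ because $D$ is convex and both endpoints lie in $D$. Since $G$ is continuously differentiable on $D$, I can write
\[
G(\mathbf{x}+\mathbf{p}) - G(\mathbf{x}) = \int_0^1 G'(\mathbf{x} + t \mathbf{p})\,\mathbf{p}\, dt.
\]
The key trick is then to subtract the constant integrand $G'(\mathbf{x})\mathbf{p} = \int_0^1 G'(\mathbf{x})\mathbf{p}\, dt$ from both sides, converting the deviation from linear approximation into a single integral involving the difference of Jacobians.

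After this rewriting, the quantity to be bounded is
\[
G(\mathbf{x}+\mathbf{p}) - G(\mathbf{x}) - G'(\mathbf{x})\mathbf{p} \;=\; \int_0^1 \bigl(G'(\mathbf{x}+t\mathbf{p}) - G'(\mathbf{x})\bigr)\,\mathbf{p}\, dt.
\]
Next I would pass the norm inside the integral (standard for Bochner/Riemann integrals of continuous vector-valued functions) and use the induced operator norm inequality $\|A\mathbf{p}\| \le \|A\|\|\mathbf{p}\|$ to pull $\|\mathbf{p}\|$ out. The Lipschitz hypothesis on $G'$ at $\mathbf{x}$, applied to $\mathbf{v} = \mathbf{x}+t\mathbf{p}$, gives $\|G'(\mathbf{x}+t\mathbf{p}) - G'(\mathbf{x})\| \le L\,t\,\|\mathbf{p}\|$ for every $t \in [0,1]$.

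Combining these ingredients yields the bound $\int_0^1 L t\,\|\mathbf{p}\|^2\, dt = \tfrac{L}{2}\|\mathbf{p}\|^2$, which is exactly the desired inequality. I do not foresee a real obstacle here, as every step is routine once one notices the subtraction trick in the first paragraph; the only mild care needed is justifying that the whole segment $\{\mathbf{x}+t\mathbf{p} : t \in [0,1]\}$ lies in $D$ (by convexity), so that Lipschitz continuity of $G'$ applies pointwise along the path and the fundamental theorem of calculus is legitimate.
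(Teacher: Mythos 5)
Your proof is correct and is exactly the standard argument (the one given in the cited reference of Dennis and Schnabel, where this appears as Lemma 4.1.12): the paper itself offers no proof, merely citing the result. All steps — convexity guaranteeing the segment lies in $D$, the fundamental theorem of calculus, subtracting $\int_0^1 G'(\mathbf{x})\mathbf{p}\,dt$, and the pointwise Lipschitz bound $\|G'(\mathbf{x}+t\mathbf{p})-G'(\mathbf{x})\|\le Lt\|\mathbf{p}\|$ integrated to give the factor $\tfrac{1}{2}$ — are valid.
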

\begin{theorem}\label{th2}
Let Assumption (\ref{ass_noisy1}) hold and let observations (Equation (\ref{eq2})) to be noisy. 
Then 
\begin{equation}\label{eqq19}
\limsup_{k\to\infty}\| \mathbf{e}_k\| \le \frac{\alpha c}{1-\alpha},
\end{equation}
provided the iteration $\mathbf{u}^{(k)}$ does not leave the convex set $D$ for all $k$.
\end{theorem}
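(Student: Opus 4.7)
The plan is to mimic the derivation of Theorem~\ref{th1} but carry an extra term produced by the nonzero noise $\eta$. Writing $A_k := G_k'^T G_k' + \alpha H^T H$, substituting $\mathbf{y} = H\mathbf{u}^{\dagger} + \eta$ into the iteration (\ref{eq6}) and subtracting $\mathbf{u}^{\dagger}$ gives
\begin{equation*}
\mathbf{e}_{k+1} = \mathbf{e}_k - A_k^{-1}\bigl(G_k'^T G_k + \alpha H^T H\, \mathbf{e}_k - \alpha H^T \eta\bigr).
\end{equation*}
Using the same identity $I - \alpha A_k^{-1} H^T H = A_k^{-1} G_k'^T G_k'$ that already appears in Theorem~\ref{th1}, this simplifies to
\begin{equation*}
\mathbf{e}_{k+1} = A_k^{-1} G_k'^T \bigl(G_k' \mathbf{e}_k - G_k\bigr) + \alpha A_k^{-1} H^T \eta.
\end{equation*}
The first term reproduces exactly the expression from the noise-free proof, while the second is a persistent perturbation whose size is controlled by the parameter $\alpha$.

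The next step is to estimate each term using the local assumptions. For the first, I would apply Lemma~\ref{lemma1} with $\mathbf{x}=\mathbf{u}^{(k)}$ and $\mathbf{p}=-\mathbf{e}_k$ together with $G(\mathbf{u}^{\dagger})=0$ to get $\|G_k'\mathbf{e}_k-G_k\|\le (L_2/2)\|\mathbf{e}_k\|^2$, and combine with (\ref{eq1_ass2}) to bound the first term by $\|\mathbf{e}_k\|^2/(2c)$. The second term is controlled directly by (\ref{eq2_ass2}), giving $\alpha c/2$. Putting the pieces together yields the scalar perturbed quadratic recurrence
\begin{equation*}
\|\mathbf{e}_{k+1}\| \le \frac{1}{2c}\|\mathbf{e}_k\|^2 + \frac{\alpha c}{2}.
\end{equation*}

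From here I would first show by induction that $\|\mathbf{e}_k\|\le c$ for every $k$. The base case is part of Assumption~\ref{ass_noisy1}, and if $\|\mathbf{e}_k\|\le c$ then the recurrence gives $\|\mathbf{e}_{k+1}\|\le c/2+\alpha c/2=(1+\alpha)c/2<c$, where the strict inequality uses the hypothesis $0<\alpha<1$. With this a priori bound in hand, the quadratic term linearises via $\|\mathbf{e}_k\|^2\le c\|\mathbf{e}_k\|$, and the recurrence reduces to the affine contraction $\|\mathbf{e}_{k+1}\|\le \tfrac{1}{2}\|\mathbf{e}_k\|+\tfrac{\alpha c}{2}$. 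Iterating gives $\|\mathbf{e}_k\|\le 2^{-k}\|\mathbf{e}_0\|+\alpha c\,(1-2^{-k})$, so $\limsup_{k\to\infty}\|\mathbf{e}_k\|\le \alpha c$, which is in turn bounded by $\alpha c/(1-\alpha)$ for $\alpha\in(0,1)$, yielding the claimed bound (\ref{eqq19}).

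The main obstacle is the joint use of $\alpha<1$ and the a priori bound $\|\mathbf{e}_k\|\le c$: the strict inequality $\alpha<1$ is exactly what produces the contraction factor $(1+\alpha)/2<1$ needed to close the induction, and without that induction the quadratic term $\|\mathbf{e}_k\|^2/(2c)$ could itself dominate and destroy boundedness. The condition (\ref{eq2_ass2}) is what couples the noise budget to the same constant $c$ that measures the initial error, so that the residual and noise contributions appear on a single common scale in the perturbed recurrence. A sharper fixed-point analysis of the map $x\mapsto x^2/(2c)+\alpha c/2$ would in fact give the tighter asymptotic $c(1-\sqrt{1-\alpha})$, but the crude linearisation above already suffices for the stated bound.
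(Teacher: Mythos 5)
Your proposal is correct, and it follows the paper's argument exactly up to the scalar recurrence $\|\mathbf{e}_{k+1}\| \le \tfrac{1}{2c}\|\mathbf{e}_k\|^2 + \tfrac{\alpha c}{2}$: the error decomposition, the identity $I-\alpha A_k^{-1}H^TH = A_k^{-1}G_k'^TG_k'$, the use of Lemma~\ref{lemma1} with $G(\mathbf{u}^\dagger)=0$, and the invocation of (\ref{eq1_ass2})--(\ref{eq2_ass2}) are all the same as in the paper. Where you diverge is in how the recurrence is solved. The paper unrolls the quadratic recursion directly, using $(a+b)^2\le 2a^2+2b^2$ at each level to obtain $\|\mathbf{e}_k\| \le 2^{-k}(1/c)^{2^k-1}\|\mathbf{e}_0\|^{2^k} + c\sum_{i=0}^{k-1}\alpha^{2^i}/2^{i+1}$, and then crudely bounds the lacunary sum $\sum_i \alpha^{2^i}$ by the geometric series $\alpha\sum_i\alpha^i$ — which is precisely where the factor $1/(1-\alpha)$ in (\ref{eqq19}) originates. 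You instead first close an induction $\|\mathbf{e}_k\|\le c$ (using $\alpha<1$ to get the contraction $(1+\alpha)/2<1$), linearise the quadratic term via $\|\mathbf{e}_k\|^2\le c\|\mathbf{e}_k\|$, and iterate the resulting affine contraction. This buys you a strictly sharper asymptotic bound, $\limsup_k\|\mathbf{e}_k\|\le \alpha c \le \alpha c/(1-\alpha)$, and your closing remark about the fixed point $c(1-\sqrt{1-\alpha})$ of $x\mapsto x^2/(2c)+\alpha c/2$ sharpens it further; the paper's unrolling, on the other hand, keeps the doubly-exponential decay of the initial-error term explicit and avoids any a~priori boundedness induction. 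Both arguments share the same implicit caveats (the iterates and $\mathbf{u}^\dagger$ must lie in $D$ for Lemma~\ref{lemma1} and the assumptions to apply at every step), which the theorem's hypothesis on $\mathbf{u}^{(k)}$ is meant to cover.
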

\begin{proof}
For the sake of simplicity, we use $G_k$ and $G^{'}_k$ instead of $G(\mathbf{u}^{(k)})$ and $G'(\mathbf{u}^{(k)})$, respectively. Substituting Equation (\ref{eq6}) into Equation (\ref{err_def}) we have
\begin{eqnarray*}\label{eq17}
e_{k+1} &=& \mathbf{u}^{(k+1)}-\mathbf{u}^{\dagger}\\\nonumber
&=&  \mathbf{u}^{(k)}-\mathbf{u}^{\dagger} - (G'^{T}_kG'_k +\alpha H^TH)^{-1} (G'^{T}_kG_k+\alpha H^T(H\mathbf{u}^{(k)}-\mathbf{y})) ,
\end{eqnarray*}
Using Equation (\ref{eq2}) in the above equation and adding and subtracting $G'^{T}_kG'_k \mathbf{e}_k$ leads to 
\begin{eqnarray*}\label{eq19}
\mathbf{e}_{k+1} &=& \mathbf{e}_k - (G'^{T}_kG'_k +\alpha H^TH)^{-1}(G'^{T}_kG_k+\alpha H^TH \mathbf{e}_k -\alpha H^T\eta))\\\nonumber
&=&  \mathbf{e}_k - (G'^{T}_kG'_k +\alpha H^TH)^{-1}(G'^{T}_k G'_k \mathbf{e}_k+\alpha H^TH \mathbf{e}_k+G'^{T}_k(G_k-G'_k \mathbf{e}_k) -\alpha H^T\eta))\\\nonumber
&=&  - (G'^{T}_k G'_k +\alpha H^TH)^{-1} (G'^{T}_k(G_k-G(\mathbf{u}^\dagger)-G'_k \mathbf{e}_k) -\alpha H^T\eta),
\end{eqnarray*}
since $G(\mathbf{u}^\dagger)=0$. By taking norm of both sides of the above equation and using Lemma \ref{lemma1}, we get
\begin{equation}\label{eq_eta}
    \|\mathbf{e}_{k+1}\| \le \frac{L_2}{2}\|(G'^{T}_k G'_k +\alpha H^TH)^{-1} G'^{T}_k\|\|\mathbf{e}_k\|^2 + \alpha \|(G'^{T}_k G'_k +\alpha H^TH)^{-1}\| \|H^T\eta\|.
\end{equation}
Using Equations (\ref{eq1_ass2}) and (\ref{eq2_ass2}) in the above expression leads to
\begin{equation*}
\|\mathbf{e}_{k+1}\| \le  \frac{ 1}{2 c} \|\mathbf{e}_k\|^2 + \alpha \frac{c}{2},
\end{equation*}
Using $(a+b)^2\le 2a^2 + 2b^2$ in the inequality above gives rise to
\begin{equation*}
\|\mathbf{e}_k\| \le    2^{-k} \left(\frac{1}{c}\right)^{2^k -1} \|\mathbf{e}_0\|^{2^k} + c \sum_{i=0}^{k-1} \left(\frac{\alpha^{2^i}}{2^{i+1}}\right),\quad\mbox{for}\quad k=1,2,\dots.
\end{equation*}
Since $\|\mathbf{e}_0\|\le c$ and $2^{-i} <1$ for $\forall i=0,1,\dots$, we get
\begin{equation*}\label{eq_new19}
\|\mathbf{e}_k\| <  2^{-k} c + c \sum_{i=0}^{k-1} \alpha^{2^i}.
\end{equation*}
Since $0<\alpha<1$, we have $\sum_{i=0}^{k-1}\alpha^{2^{i}} < \alpha \sum_{i=0}^{k}\alpha^{i}$ leading consequently to
\begin{equation*}\label{eq26}
\|\mathbf{e}_k\| <  2^{-k} c + c \alpha\sum_{i=0}^{k}\alpha^{i},
\end{equation*}
and in the limit of $k$ going to infinity
\begin{equation*}
 \limsup_{k\to\infty} \|\mathbf{e}_k\| < \frac{\alpha c}{1-\alpha}.
\end{equation*}
\end{proof}
\begin{remark}\label{remark}
Note that in case of $\eta_j=0, \forall j=1,\cdots,N$, we have $\|H^T \eta\|=0$
and Equation (\ref{eq2_ass2}) is trivially satisfied, which in turn implies assumption of $\alpha>0$ instead of $1>\alpha>0$. Furthermore, Equations (\ref{Lip_noisy}) and (\ref{eq1_ass2}) are equivalent to Equations (\ref{eqq8}) and (\ref{cond1}) but locally and thus Theorem (\ref{th2}) is
equivalent to Theorem (\ref{th1}) but locally.

\end{remark}

\section{JOINT STATE-PARAMETER ESTIMATION}\label{param}
Data assimilation can also be used if the dynamical model depends on uncertain parameter. We extend the Gauss-Newton DA method Equation (\ref{eq6}) to joint state-parameter estimation. We consider 
\begin{equation*}\label{new_G}
G(\mathbf{u};\bm{\theta})=u_{n+1}-F_n(u_n;{\theta}),
\end{equation*}
where $\bm{\theta}=(\theta_1,\theta_2,\dots,\theta_q)^{T}$ is an uncertain parameter. Then the minimization problem becomes
\begin{equation*}\label{eq19}
\min_{\{\mathbf{u}\in \mathbb{R}^{n N},\ \bm{\theta}\in \mathbb{R}^q\}} \{ \|G(\mathbf{u};\bm{\theta})\|^2 + \alpha \|H\mathbf{u}-\mathbf{y}\|^2\}.
\end{equation*}
Starting with an initial guess for the state $\mathbf{u}^{(0)}$ and parameters $\bm\theta^{(0)}$, the iteration proceeds as follows:
\begin{eqnarray}
\mathbf{u}^{(k+1)} &=&\mathbf{u}^{(k)} - \mathcal{L}(\mathbf{u}^{(k)};\bm\theta^{(k)}),\label{eq21}\\
\bm{\theta}^{(k+1)} &=&\bm\theta^{(k)} - \mathcal{S}(\mathbf{u}^{(k+1)};\bm\theta^{(k)}).\label{eq23}
\end{eqnarray}
Here
 $\mathcal{L}(\mathbf{u}^{(k)};\cdot)$ and $\mathcal{S}(\cdot; \bm{\theta}^{(k)})$
 are defined by
\begin{eqnarray*}
\mathcal{L}(\mathbf{u}^{(k)};\cdot) & = & \left(G_{u}^{'T}(\mathbf{u}^{(k)};\cdot) G_{u}^{'}(\mathbf{u}^{(k)};\cdot)+\alpha H^{T}H\right)^{-1} \bigg(G_{u}^{'T}(\mathbf{u}^{(k)};\cdot)G(\mathbf{u}^{(k)};\cdot)+ \alpha H^{T}(H\mathbf{u}^{(k)}-\mathbf{y})\bigg),\\
\mathcal{S}(\cdot; \bm{\theta}^{(k)}) & = &  \left(G_{\theta}^{'T}(\cdot;\bm\theta^{(k)})G_{\theta}^{'}(\cdot;\bm\theta^{(k)})\right)^{-1}
G_{\theta}^{'T}(\cdot;\bm\theta^{(k)}) G(\cdot;\bm\theta^{(k)}),
\end{eqnarray*}
respectively, $G'_u$ and $G'_{\theta}$ are derivatives of $G$ with respect to $u$ and $\theta$, respectively.

\begin{assumption}\label{ass_par}
We consider $G(\mathbf{u};\bm{\theta})= \mathcal{G}(\mathbf{u})+A\bm{\theta}$.
$\mathcal{G}$ is continuously differentiable in the open convex set $D\subset \mathbb{R}^{nN}$ and Lipshitz in $D$, with the Lipschitz constant denoted by $L_0>0$, namely
\begin{equation}\label{assth5_0}
    \|\mathcal{G}(\mathbf{u})-\mathcal{G}(\mathbf{v})\| \le L_0 \|\mathbf{u}-\mathbf{v}\|, \quad \forall \mathbf{u},\mathbf{v}\in D.
\end{equation}
The Jacobian  $\mathcal{G}'(\mathbf{u})$ is locally Lipschitz continuous in $D$, with the Lipschitz constant denoted by $L_3>0$, namely
\begin{equation}\label{Lip_par}
\|\mathcal{G}'(\mathbf{u}) - \mathcal{G}'(\mathbf{v})\| \le L_3 \|\mathbf{u}-\mathbf{v}\|, \quad \forall \mathbf{u},\mathbf{v} \in D,
\end{equation}
and there exists $\alpha>0$ such that 
\begin{equation}
  \|(\mathcal{G}'(\mathbf{x})^T\mathcal{G}'(\mathbf{x})+\alpha H^TH)^{-1}\mathcal{G}'^{T}(\mathbf{x})\|\le (2 L_3 c)^{-1}, \quad  \forall \mathbf{x}\in D,\label{assth5_1}
\end{equation}
with $L_3$ being the Lipschitz constant from Equation (\ref{Lip_par}),  and $c>0$ being an upper bound on the norm of initial error $\|\mathbf{e}_0\|$. Furthermore, $b/c<1$ where $b=\|A(A^TA)^{-1}A^{T}\|L_0/L_3$.
\end{assumption}

\begin{theorem}\label{th_par}
We consider $G(\mathbf{u};\bm{\theta})= \mathcal{G}(\mathbf{u})+A\bm{\theta}$ and noise-free observations of $\mathbf{u}$. We assume that Assumption \ref{ass_par} holds
for $G(\mathbf{u};\bm{\theta})$. Then
\begin{equation*}
     \limsup_{k\to\infty} \|\mathbf{e}_{k}\| < \frac{b/2}{1-b/c},
\end{equation*}
and
\begin{equation*}
    \limsup_{k\to\infty}\|\bm\theta^{(k)}-\bm\theta^{\dagger}\|<L_0\|(A^{T}A)^{-1} A^T\|  \frac{b/2}{1-b/c}.
\end{equation*}
\end{theorem}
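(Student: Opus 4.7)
My plan is to mimic the structure of the proofs of Theorems~\ref{th1} and~\ref{th2}, while exploiting the affine structure $G(\mathbf{u};\bm\theta)=\mathcal{G}(\mathbf{u})+A\bm\theta$ to decouple the parameter part of the iteration. Since $G'_{\bm\theta}=A$ is constant, $\mathcal{S}(\mathbf{u};\bm\theta)=(A^{T}A)^{-1}A^{T}\mathcal{G}(\mathbf{u})+\bm\theta$, so (\ref{eq23}) collapses to the closed form $\bm\theta^{(k+1)}=-(A^{T}A)^{-1}A^{T}\mathcal{G}(\mathbf{u}^{(k+1)})$ for every $k\ge 0$. Because $G(\mathbf{u}^{\dagger};\bm\theta^{\dagger})=0$ gives the same identity at the truth, subtracting yields $\bm\theta^{(k+1)}-\bm\theta^{\dagger}=-(A^{T}A)^{-1}A^{T}(\mathcal{G}(\mathbf{u}^{(k+1)})-\mathcal{G}(\mathbf{u}^{\dagger}))$ and the Lipschitz bound (\ref{assth5_0}) gives $\|\bm\theta^{(k+1)}-\bm\theta^{\dagger}\|\le L_{0}\|(A^{T}A)^{-1}A^{T}\|\|\mathbf{e}_{k+1}\|$. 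The second conclusion of the theorem therefore follows immediately from the first.

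For the state, I would substitute (\ref{eq21}) and $\mathbf{y}=H\mathbf{u}^{\dagger}$ into $\mathbf{e}_{k+1}=\mathbf{u}^{(k+1)}-\mathbf{u}^{\dagger}$ and reuse the identity $I-\alpha(\mathcal{G}'^{T}_{k}\mathcal{G}'_{k}+\alpha H^{T}H)^{-1}H^{T}H=(\mathcal{G}'^{T}_{k}\mathcal{G}'_{k}+\alpha H^{T}H)^{-1}\mathcal{G}'^{T}_{k}\mathcal{G}'_{k}$ from the earlier proofs, then add and subtract $G(\mathbf{u}^{\dagger};\bm\theta^{\dagger})=0$, ending up with
\[ \mathbf{e}_{k+1}=(\mathcal{G}'^{T}_{k}\mathcal{G}'_{k}+\alpha H^{T}H)^{-1}\mathcal{G}'^{T}_{k}\Bigl[\mathcal{G}'_{k}\mathbf{e}_{k}-(\mathcal{G}(\mathbf{u}^{(k)})-\mathcal{G}(\mathbf{u}^{\dagger}))-A(\bm\theta^{(k)}-\bm\theta^{\dagger})\Bigr]. \]
Lemma~\ref{lemma1} applied to $\mathcal{G}$ bounds the first two terms of the bracket by $\tfrac{L_{3}}{2}\|\mathbf{e}_{k}\|^{2}$, while for $k\ge 1$ the closed form above lets me rewrite $A(\bm\theta^{(k)}-\bm\theta^{\dagger})=-A(A^{T}A)^{-1}A^{T}(\mathcal{G}(\mathbf{u}^{(k)})-\mathcal{G}(\mathbf{u}^{\dagger}))$, whose norm is at most $\|A(A^{T}A)^{-1}A^{T}\|L_{0}\|\mathbf{e}_{k}\|=bL_{3}\|\mathbf{e}_{k}\|$. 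Combining with (\ref{assth5_1}) yields the scalar recursion
\[ \|\mathbf{e}_{k+1}\|\le\frac{1}{4c}\|\mathbf{e}_{k}\|^{2}+\frac{b}{2c}\|\mathbf{e}_{k}\|. \]

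The remaining task is to iterate this inequality. I would first show by induction that $\|\mathbf{e}_{k}\|\le c$ for every $k$, which needs only $b/c<1$ together with $\|\mathbf{e}_{0}\|\le c$ (since then $\tfrac{c}{4}+\tfrac{b}{2}\le c$), so that the ball on which Assumption~\ref{ass_par} is postulated is preserved. On $[0,c]$ I would then majorize the quadratic contribution and reduce to the affine recursion $a_{k+1}\le \tfrac{b}{2}+\tfrac{b}{c}a_{k}$, whose unique positive fixed point is exactly $\tfrac{b/2}{1-b/c}$, and pass to the limsup via the geometric series $\sum(b/c)^{i}$. The main obstacle I foresee is this bookkeeping step: checking the pointwise majorant $\tfrac{a^{2}}{4c}\le \tfrac{b}{2}+\tfrac{b}{2c}a$ on the invariant interval reduces to $(a-b)^{2}\le b(2c+b)$, which is tight near $c\approx 4b$. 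The cleanest route is probably a two-stage argument, first using $\tfrac{a^{2}}{4c}\le\tfrac{a}{4}$ to get the strictly contractive linear bound $a_{k+1}\le(\tfrac{1}{4}+\tfrac{b}{2c})a_{k}$ driving $\|\mathbf{e}_{k}\|$ into the small-error regime, and then invoking the sharper affine majorant to conclude the strict inequality on the limsup. The parameter bound then follows at once from the first paragraph.
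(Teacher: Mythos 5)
Your proposal is correct and, up to the final bookkeeping, follows the same route as the paper's proof in \ref{proof_th}: both exploit the affine structure to obtain the closed form $\bm\theta^{(k)}-\bm\theta^{\dagger}=-(A^{T}A)^{-1}A^{T}\bigl(\mathcal{G}(\mathbf{u}^{(k)})-\mathcal{G}(\mathbf{u}^{\dagger})\bigr)$, substitute it into the state-error recursion, and apply Lemma \ref{lemma1} together with Equations (\ref{assth5_0}) and (\ref{assth5_1}) to reach $\|\mathbf{e}_{k+1}\|\le \tfrac{1}{4c}\|\mathbf{e}_{k}\|^{2}+\tfrac{b}{2c}\|\mathbf{e}_{k}\|$; the parameter bound is then read off from the closed form in both arguments. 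Where you genuinely diverge is in iterating this scalar inequality. The paper loosens it to $\|\mathbf{e}_{k+1}\|\le\tfrac{1}{4c}(\|\mathbf{e}_{k}\|+b)^{2}\le\tfrac{1}{2c}\|\mathbf{e}_{k}\|^{2}+\tfrac{b^{2}}{2c}$, which introduces a constant term, and then unrolls the recursion exactly as in Theorem \ref{th2}, extracting the limit $\tfrac{b/2}{1-b/c}$ from the geometric tail. You instead keep the homogeneous recursion, prove invariance of the ball $\|\mathbf{e}_{k}\|\le c$ (which indeed follows from $b<c$, since $\tfrac{c}{4}+\tfrac{b}{2}<c$), and note that on this ball $\tfrac{1}{4c}a^{2}\le\tfrac{a}{4}$ gives the linear contraction $a_{k+1}\le(\tfrac14+\tfrac{b}{2c})a_{k}$ with factor below $3/4$. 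You present this only as a first stage, but it already finishes the proof: the contraction forces $\|\mathbf{e}_{k}\|\to 0$, which is strictly stronger than the stated limsup bound (for $b>0$), so the ``sharper affine majorant'' second stage --- whose pointwise validity you correctly observe fails once $c>4b$ --- is unnecessary and should simply be dropped. Two minor caveats you share with the paper: the closed form for $\bm\theta^{(k)}$ holds only for $k\ge1$ (you flag this; the paper silently applies it from $k=0$), so the recursion should be started at $k=1$; and passing a strict inequality to a $\limsup$ yields only $\le$ in general, so the strictness in the conclusion is informal in both arguments.
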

Theorem (\ref{th_par}) is proven in (\ref{proof_th}).
\section{NUMERICAL EXPERIMENTS}\label{sec5}
In this section, we present numerical experiments to illustrate  the theoretical results of Section \ref{erran} and Section \ref{param}. 
First, we consider noise-free observations. We illustrate the convergence Theorem (\ref{th1}) under different sizes of observations. Second, we consider noisy observations and show the theoretical bound (Equation (\ref{eqq19})), computed numerically. 
Third, we consider a dynamical model with uncertain parameters and estimate them using the Gauss-Newton DA method (Equations (\ref{eq21}) and (\ref{eq23})). For each set-up, we perform 100 numerical experiments with different realizations of truth $\mathbf{u}^{\dagger}$, observations $\mathbf{y}$, and background solution $\mathbf{u}^b$. 
For each solution $\mathbf{u}$, we compute cost function, error with respect to the truth, error with respect to the truth of observed variables, and error with respect to the truth of non-observed variables :
 \begin{align}
C &= \|G(\mathbf{u})\| + \alpha \|\mathbf{y}-H\mathbf{u}\|,\label{eq43}\\ 
\mathcal{E} &= \|\mathbf{u}^{\dagger}-\mathbf{u}\|,\label{eq44}\\
\mathcal{E}^{O} &= \|H\mathbf{u}^{\dagger}-H\mathbf{u}\|,\label{eq45}\\
\mathcal{E}^{N} &= \|(I-H^T H)(\mathbf{u}^{\dagger}-\mathbf{u})\|,\label{eq46}
\end{align}
respectively.

We compare the Gauss-Newton DA method to WC4DVar, which minimizes the following cost function:
\begin{equation}\label{eq-wc}
J(u_0;\{y_n\}) = \frac{1}{2}\sum_{n=1}^N (y_n-Hu_n)^T R^{-1}(y_n-Hu_n) +\frac{1}{2} (u_n-F_n(u_{n-1})^T Q^{-1}(u_n-F_n(u_{n-1}),    
\end{equation}
where $R$ is covariance matrix of observational error and $Q$ is covariance matrix of model error (see, e.g., \cite{lewis1985use, talagrand1997assimilation, talagrand1987variational, sasaki1970some, courtier1994strategy}). 
The main distinction of the Gauss-Newton DA method from WC4DVar is in the tunable  parameter $\alpha$, which has a significant influence on error convergence or error boundedness. 
The minimization of the WC4DVar cost function is done by a Matlab built-in Levenberg-Marquardt algorithm.

We perform numerical experiments with the Lorenz 63 (L63) and Lorenz 96 (L96) models. L63 is a chaotic model which is widely used as a toy model in data-assimilation numerical experiments. It simulates atmospheric convection in a simple way \cite{Lorenz63}. The model is described by the following ODEs
\begin{equation}\label{eql63}
\dot{x}_1 = \sigma (x_2-x_1), \quad \dot{x}_2 = x_1 (\rho -x_3) - x_2,\quad \dot{x}_3 = x_1x_2 - b x_3.
\end{equation}
We implement the L63 model with the standard parameters, $\sigma=10$, $\rho=28$, and $b=8/3$. The differential equations are discretized with a forward Euler scheme with time step $\Delta t = 0.005$. The initial conditions are random numbers, which are independently and identically distributed.
We generate observations by computing a solution of L63 on $t\in[0,100]$. The observations are drawn at every tenth time step, which corresponds to assimilation time window of 6 hours.
For the Euler-discretized L63 model, the Lipschitz condition is
\[ \|G'(X)-G'(Y)\| \le \sqrt{2} \Delta t \|X-Y\|.\]
(For derivation of the Lipschitz constant see \ref{app1}). 

The L96 model \cite{lorenz1996predictability} is a  one-dimensional atmosphere model which is described by the following ODEs
\begin{equation}\label{L96}
\dot{x}_{l} = -x_{l-2}x_{l-1} + x_{l-1}x_{l+1} - x_l+\mathcal{F}, \quad l=1,\dots,d,
\end{equation}
where the dimension $d$ and forcing $\mathcal{F}$ are parameters. Cyclic boundary conditions are imposed. We implement the L96 model with the standard parameter choices $d=40$ and $\mathcal{F}=8$.
 We  discretize the differential equations with a forward Euler scheme with time step $\Delta t = 0.0025$. The initial conditions are random numbers which are independently and identically distributed. We generate observations by computing a solution of L96 on $t\in[0,100]$. The observations are drawn at every tenth time step, which corresponds to assimilation time window of 3 hours. The Lipschitz condition of the Euler-discretized L96 model is:
\[ \|G'(X)-G'(Y)\| \le \sqrt{6} \Delta t \|X-Y\|.\]
(For derivation of the Lipschitz constant see \ref{app2}).

\subsection{\textbf{State estimation given noise-free observations}}
We perform numerical experiments with noise-free observations, thus $\eta_j=0, \ j=0,\cdots,N$ in Equation (\ref{eq2}). In order to satisfy the conditions of Theorem (\ref{th1}), we first need to compute the Lipschitz constant of the Jacobian of the dynamical model. Next, we need to find a suitable $\alpha$. We use  Algorithm \ref{alg1}, where an upper bound on the initial error, $c$, is chosen arbitrarily.
\begin{algorithm}[H]
	\caption{Finding parameter $\alpha$ in case of noise-free observations} 
	Given the initial guess of the Gauss-Newton DA method, $\mathbf{u}^{(0)}$, an upper bound on the initial error, $c$, and the Lipschitz constant $L_1$ that satisfies Equation (\ref{eqq8}) in Assumption (\ref{ass1}), we choose an arbitrary positive $\alpha_0$, for example $\alpha_0=0.001$. 
	\begin{algorithmic}[1]
		\While {${\|(G'^{T}(\mathbf{u}^{(0)})G'(\mathbf{u}^{(0)})+\alpha_0 H^TH)^{-1}G'^{T}(\mathbf{u}^{(0)})\|> \left(L_1 c\right)^{-1}}$ and 
		
		$( \sim   isnan(\|(G'(\mathbf{u}^{(0)})^TG'(\mathbf{u}^{(0)})+\alpha_0 H^TH)^{-1}G'^{T}(\mathbf{u}^{(0)})\|))$}
		\State $\alpha_0\leftarrow 2 *\alpha_0$
		\EndWhile
				\If { $isnan(\|(G'(\mathbf{u}^{(0)})^T G'(\mathbf{u}^{(0)})+\alpha_0 H^TH)^{-1}G'^{T}(\mathbf{u}^{(0)})\|)$}
		        \State Error: There is no $\alpha$.
				\Else{}
				$\alpha \leftarrow \alpha_0$
			\EndIf
	\end{algorithmic} 
	\label{alg1}
\end{algorithm}

We use the same value of $\alpha$ throughout the iteration and check whether Equation (\ref{cond1}) is satisfied. If there exists an iteration $k$ such that Equation (\ref{cond1}) does not hold we terminate the iteration, otherwise the iteration proceeds until a tolerance value $(10^{-14})$ is reached for the change in the solution $\|\mathbf{u}^{(k+1)}-\mathbf{u}^{(k)}\|$.

We consider the L63 model and assume only the first component is observed, i.e., $H=[1,0,0]$. In all of the experiments Equation (\ref{cond1}) is satisfied throughout the iteration. In Figure \ref{fig1-a}, we display error with respect to the truth (Equation (\ref{eq44})) as a function of iteration. We plot median and plus and minus one standard deviation over 100 simulations. We see that error is a decreasing function, as we expect from Theorem (\ref{th1}).  We also considered other observation operators which depend only on second or third variable and obtained equivalent results (not shown). Therefore, for the L63 model the convergence Theorem (\ref{th1}) does not depend which variable is observed.
 
 \begin{figure}[htpb]
\centering
\begin{subfigure}{0.45\linewidth}
\includegraphics[width=\linewidth]{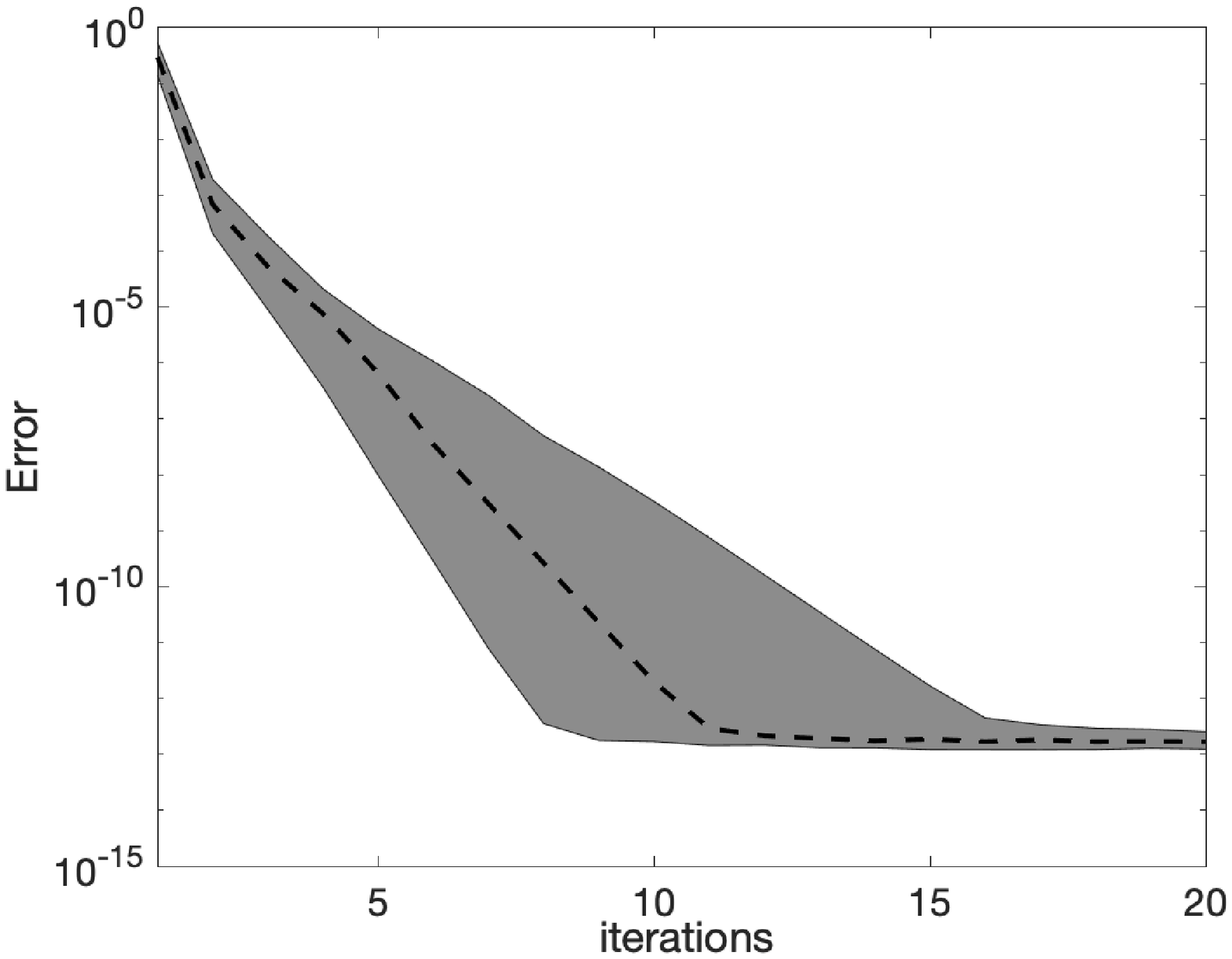}
\caption{}
\label{fig1-a}
\end{subfigure}
\begin{subfigure}{0.45\linewidth}
\includegraphics[width=\linewidth]{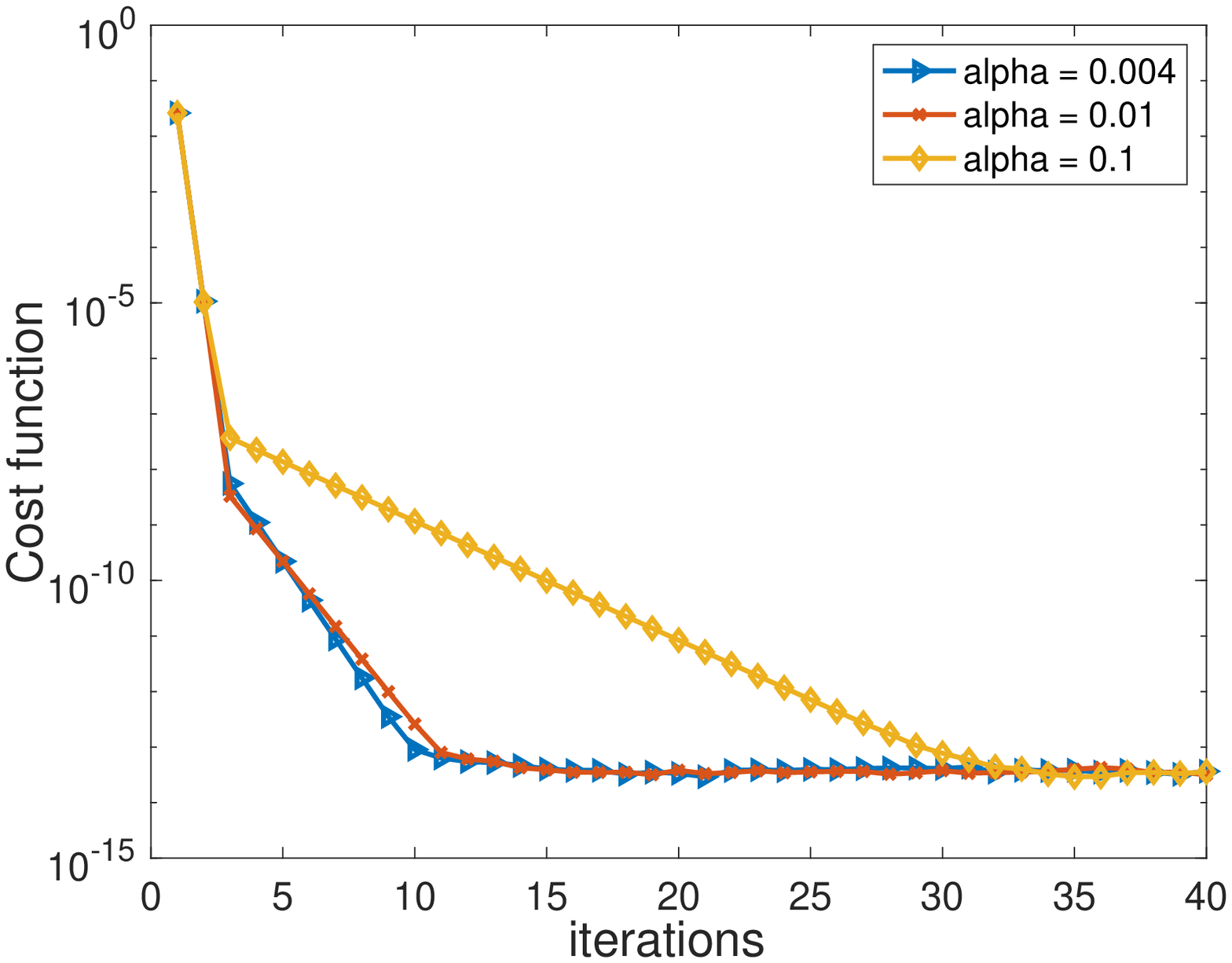}		
\caption{}
\label{fig1-b}
\end{subfigure}
\caption{Application of the Gauss-Newton DA method to L63 with noise-free observations, where only the first variable is observed. On the left: Error (Equation (\ref{eq44})) as a function of iteration: median (dashed line), +/- one standard deviation (shadowed area) over 100 simulations. On the right: Cost function (Equation (\ref{eq43})) as a function of iteration for different values of $\alpha$. }
\label{fig1}
\end{figure}
Furthermore, We perform  numerical experiments with different values of $\alpha$ that satisfy Equation (\ref{cond1}).
In Figure \ref{fig1-b}, we display cost function (Equation (\ref{eq43})) as a function of iteration for different values of $\alpha$.  A faster convergence rate is achieved at $\alpha=0.004$, as to be expected since $\alpha=0$ corresponds to the second order Newton method when observations are complete.

Next, we perform numerical experiments with the L96 model and we assume every second variable is observed.
In Figure \ref{fig2-a}, we display error with respect to the truth as a function of iteration. We plot median and plus and minus one standard deviation over 100 simulations. We see that error is a decreasing function, as we expect from Theorem (\ref{th1}). Comparing to Figure \ref{fig1-a} we observe that more iterations are needed to reach the desired tolerance in the L96 model due to the higher dimension.
 \begin{figure}[htpb]
\centering
\begin{subfigure}{0.45\linewidth}
\includegraphics[width=\linewidth]{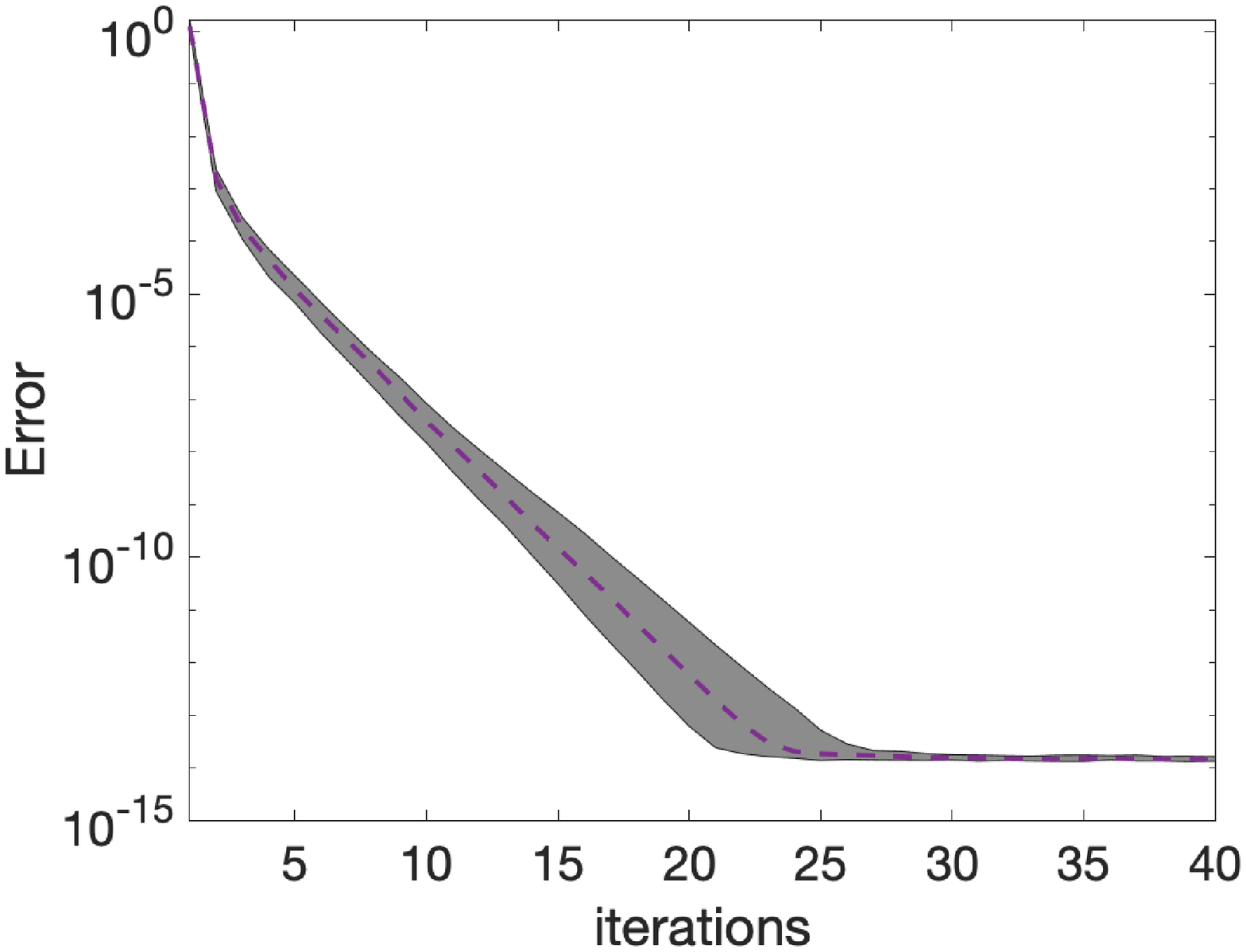}
\caption{}
\label{fig2-a}
\end{subfigure}
\begin{subfigure}{0.45\linewidth}
 \includegraphics[width=\textwidth]{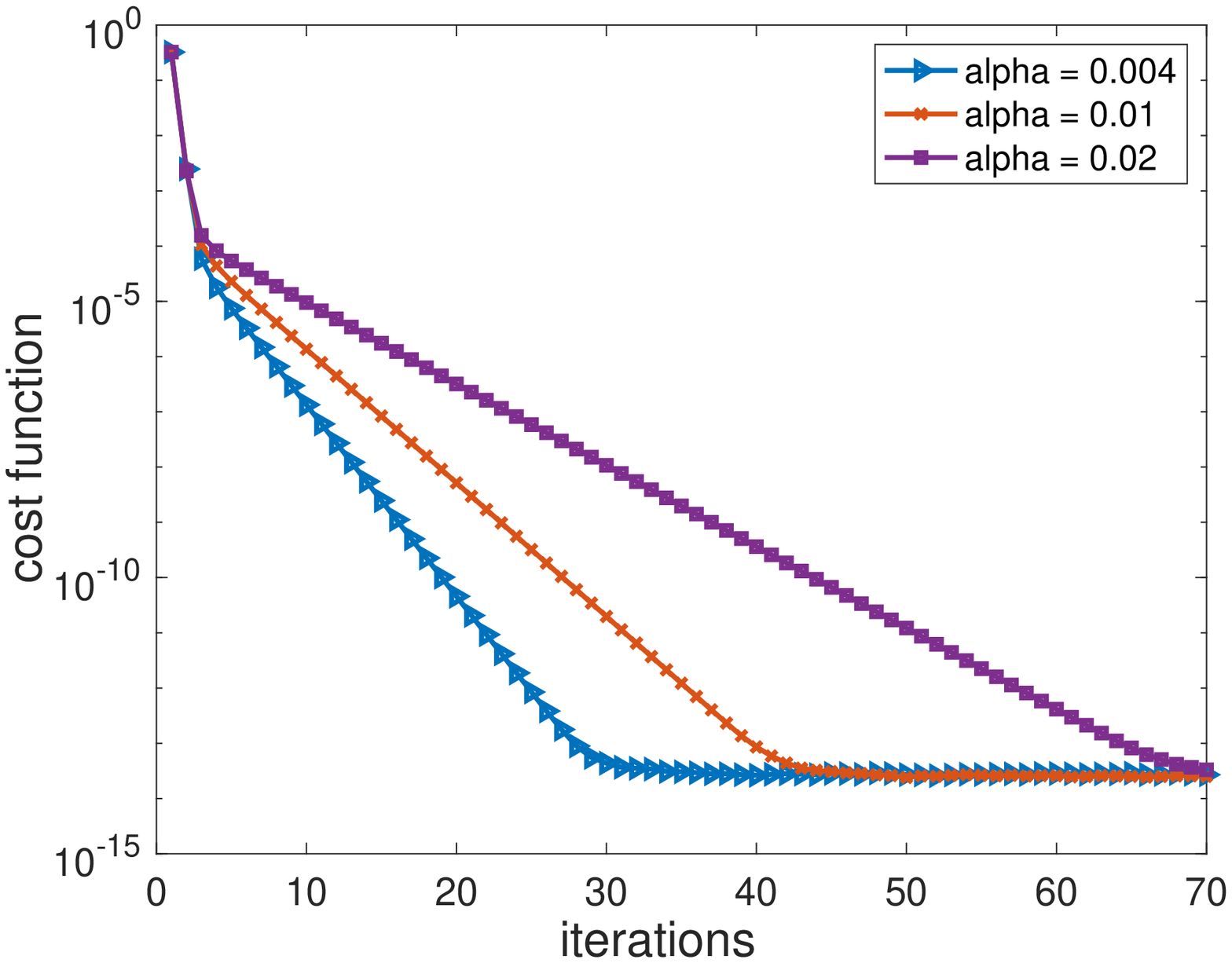}  
\caption{}
\label{fig2-b}
\end{subfigure}
\caption{Application of the Gauss-Newton DA method to L96 with noise-free observations, where every second variable is observed. On the left: Error (Equation (\ref{eq44})) as a function of iteration: median (dashed line), +/- one standard deviation (shadowed area) over 100 simulations. On the right: Cost function (Equation (\ref{eq43})) as a function of iteration for different values of $\alpha$.}
\label{fig2}
\end{figure}
In Figure \ref{fig2-b}, we plot the cost function of the L96 model for different values of $\alpha$ when  observing every second variable. A faster convergence rate is achieved at $\alpha=0.004$.
Furthermore, we investigate the convergence property under different sizes of observations. In Figure \ref{fig3}, we plot the median value of error (Equation (\ref{eq44})) over 20 simulations as a function of iteration in a semi-log scale. We see that when the size of observations decreases more iterations are needed to obtain the desired tolerance. 
 \begin{figure}[htpb]
\centering
\begin{subfigure}{0.45\linewidth}
\includegraphics[width=\linewidth]{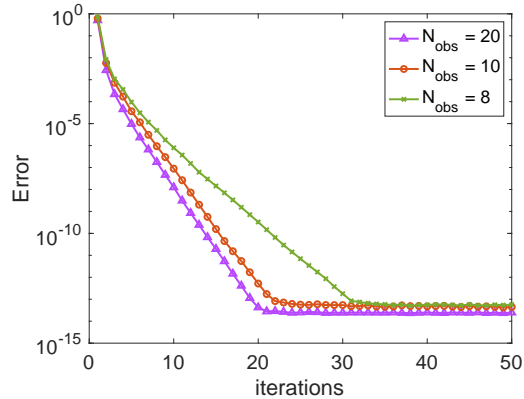}
\end{subfigure}
\caption{Application to L96 with noise-free observations. Error of the Gauss-Newton DA method as a function of iterations for different sizes of observations.}
\label{fig3}
\end{figure}

Furthermore, Equation (\ref{cond1}) of Assumption (\ref{ass1}) gives implicit requirement on the size of observations. In numerical experiments with the L96 model, we see that if the number of observations is less than eight then the matrix $(G'(\mathbf{x})^T G'(\mathbf{x})+\alpha H^TH)^{-1}$ is ill-posed and we are not able to find $\alpha$ that satisfies Equation (\ref{cond1}). Therefore, the conditions in Theorem (\ref{th1}) do not hold and error convergence is not guaranteed.

\subsection{\textbf{State estimation given noisy observations}}
We perform numerical experiments with noisy observations, where $\eta \sim \mathcal{N}(0,\gamma^2 I)$ in Equation (\ref{eq2}). 
We assume that $\|H^T\eta\|$ is bounded and known, which is a strong assumption.
In order to satisfy the conditions of Theorem (\ref{th2}), we first need to compute the Lipschitz constant of the Jacobian of the dynamical model, and then to find a suitable $\alpha$. We use  Algorithm \ref{alg2} to find $\alpha$, where an upper bound on the initial error, $c$, is chosen arbitrarily.
\begin{algorithm}[H]
	\caption{Finding parameter $\alpha$ in case of noisy observations} 
Given the initial guess of the Gauss-Newton DA method, $\mathbf{u}^{(0)}$, an upper bound on the initial error, $c$, and the Lipschitz constant $L_2$ that satisfies  Equation (\ref{Lip_noisy}) in Assumption (\ref{ass_noisy1}), we choose an arbitrary positive $\alpha_0$, for example $\alpha_0=0.001$. 
	\begin{algorithmic}[1]
		\While {${\|(G'^{T}(\mathbf{u}^{(0)})G'(\mathbf{u}^{(0)})+\alpha_0 H^TH)^{-1}G'^{T}(\mathbf{u}^{(0)})\|> (L_2 c)^{-1}}$ and 
		
		${\|H^T \eta\|\|(G'^{T}(\mathbf{u}^{(0)})G'(\mathbf{u}^{(0)})+\alpha_0 H^TH)^{-1}\|> c/2}$
		and
		
		$( \sim   isnan(\|(G'(\mathbf{u}^{(0)})^TG'(\mathbf{u}^{(0)})+\alpha_0 H^TH)^{-1}G'^{T}(\mathbf{u}^{(0)})\|))$}
		\State $\alpha_0\leftarrow 2 *\alpha_0$
		\EndWhile
				\If { $isnan(\|(G'(\mathbf{u}^{(0)})^T G'(\mathbf{u}^{(0)})+\alpha_0 H^TH)^{-1}G'^{T}(\mathbf{u}^{(0)})\|)$
				or
				$\alpha_0>1$}
		        \State Error: There is no $\alpha$.
				\Else{}
				$\alpha \leftarrow \alpha_0$
			\EndIf
	\end{algorithmic} 
	\label{alg2}
\end{algorithm}

We use the same value of $\alpha$ throughout the iteration and check whether Equations (\ref{eq1_ass2}) and (\ref{eq2_ass2}) are satisfied. If there exists an iteration $k$ such that either Equation (\ref{eq1_ass2}) or Equation (\ref{eq2_ass2}) does not hold, we terminate the iteration, otherwise the iteration proceeds until it reaches the maximum value of iterations, which is 20 for L63 and 70 for L96.

We consider the L63 model with noisy observations. We assume only the first component is observed, i.e., $H=[1,0,0]$ and $\gamma=0.01$ in Equation (\ref{eq2}). In all of the experiments Equations (\ref{eq1_ass2}) and (\ref{eq2_ass2}) are satisfied throughout the iteration. In Figure \ref{fig4}, we display error with respect to the truth (Equation (\ref{eq44})) as a function of iteration. We plot median (dashed line) and plus and minus one standard deviation (shadowed area) over 100 simulations. In Figure \ref{fig4}, we also display theoretical bound (Equation (\ref{eqq19})), we plot median (green line) over 100 simulations which depends on different $\alpha$s.
We see that error is below the theoretical bound, as we expect from Theorem (\ref{th2}). 
On the right of Figure \ref{fig4}, we display error of observed components (Equation (\ref{eq45})) as a function of iteration. We plot median and plus and minus one standard deviation over 100 simulations. We see that error of observed components (dashed line) is less than observation error $\|\mathbf{y}-H\mathbf{u}^{\dagger}\|$ (red line).  
Furthermore, comparing the right panel to the left panel of Figure \ref{fig4} we see that the error of observed components is less than the total error.

Next, we perform numerical experiments with the L96 model with noisy observations.  We assume every second variable is observed and $\gamma=0.01$ in Equation (\ref{eq2}). On the left of Figure \ref{fig5}, we display error with respect to the truth as a function of iteration. We plot median and plus and minus one standard deviation over 100 simulations. We see that error is a decreasing function for the first five iterations and after that it remains bounded. Comparing to Figure \ref{fig4} we observe that due to the higher dimensions, more iterations are needed to reach the desired tolerance in the L96 model. Moreover, as we expect from Theorem (\ref{th2}), the median of error is below the median of theoretical bound (green line) over 100 simulations. On the right of Figure \ref{fig5}, we display error of observed components (Equation (\ref{eq45})) as a function of iteration. We see that error of observed components is less than the total error (left panel of Figure \ref{fig5}),  as it is expected. Furthermore, we see from both Figures \ref{fig4} and \ref{fig5} that the theoretical bound is not tight as to be expected. 

\begin{figure}[htpb]
\centering
		\begin{subfigure}{1\linewidth}
			\includegraphics[width=\linewidth]{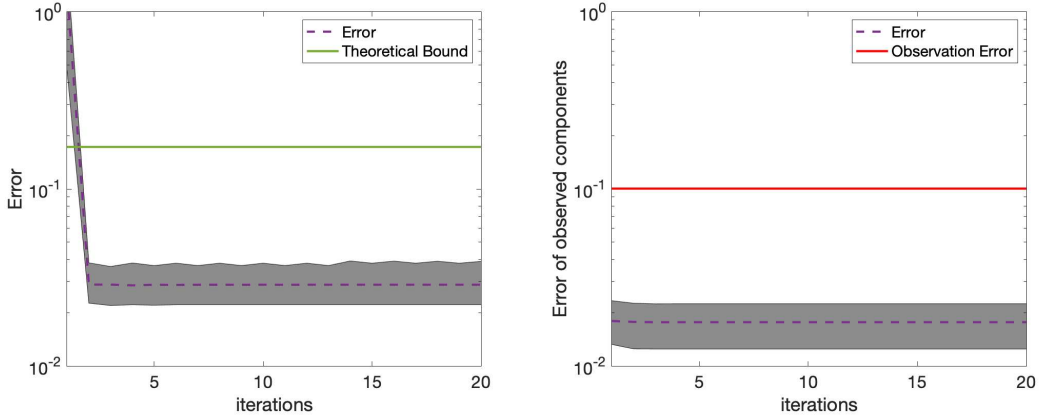}
		\end{subfigure}
		\caption{Application of the Gauss-Newton DA method to L63 with noisy observations, where only the first variable is observed and $\gamma=0.01$ in Equation (\ref{eq2}). On the left: Error (Equation (\ref{eq44})) as a function of iteration: median (dashed line), +/- one standard deviation (shadowed area) over 100 simulations and a theoretical bound (Equation (\ref{eqq19})) in green. On the right: Error of observed components (Equation (\ref{eq45})) as a function of iteration and observation error in red.}
				\label{fig4}
	\end{figure}

	\begin{figure}[htpb]
\centering
		\begin{subfigure}{1\linewidth}
			\includegraphics[width=\linewidth]{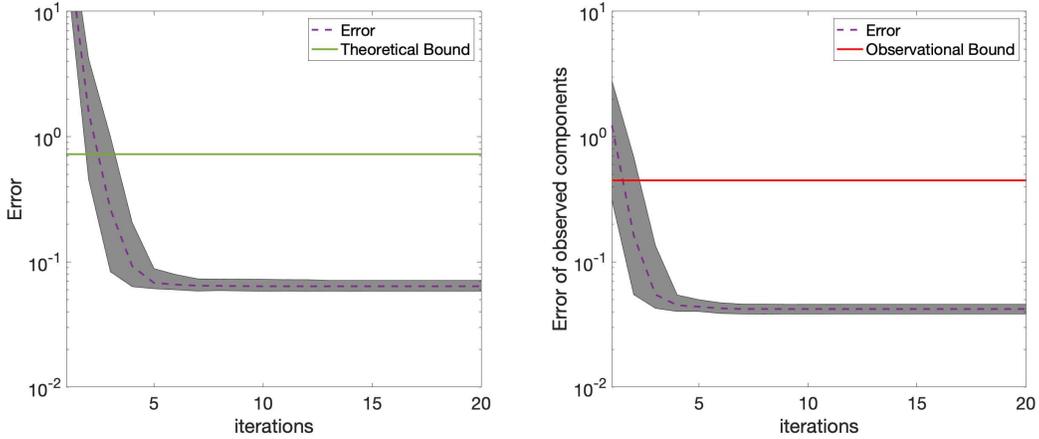}
		\end{subfigure}
		\caption{Application of the Gauss-Newton DA method to L96 with noisy observations, where every second variable is observed and $\gamma=0.01$ in Equation (\ref{eq2}). On the left: Error (Equation (\ref{eq44})) as a function of iteration: median (dashed line), +/- one standard deviation (shadowed area) over 100 simulations and a theoretical bound Equation (\ref{eqq19}) in green. On the right: Error of observed components (Equation (\ref{eq45})) and observation error in red.}
      \label{fig5}
	\end{figure}
	
	Finally, we investigate error (Equation (\ref{eq44})) as a function of observation noise level $\gamma$.
     From Theorem (\ref{th2}) follows that the convergence result is achieved as $\|H^{T}\eta\|$ goes to zero. 
     In Figure \ref{fig6}, we display error (Equation (\ref{eq44})) for different values of $\gamma$ for the L63 model on the left and for the L96 model on the right. As we expected from Remark (\ref{remark}), we see that error decreases as observation noise level decreases and the convergence result achieves when $\gamma$ is close to zero.

\begin{figure}[htpb]
\centering
\begin{subfigure}{1\linewidth}
\includegraphics[width=\linewidth]{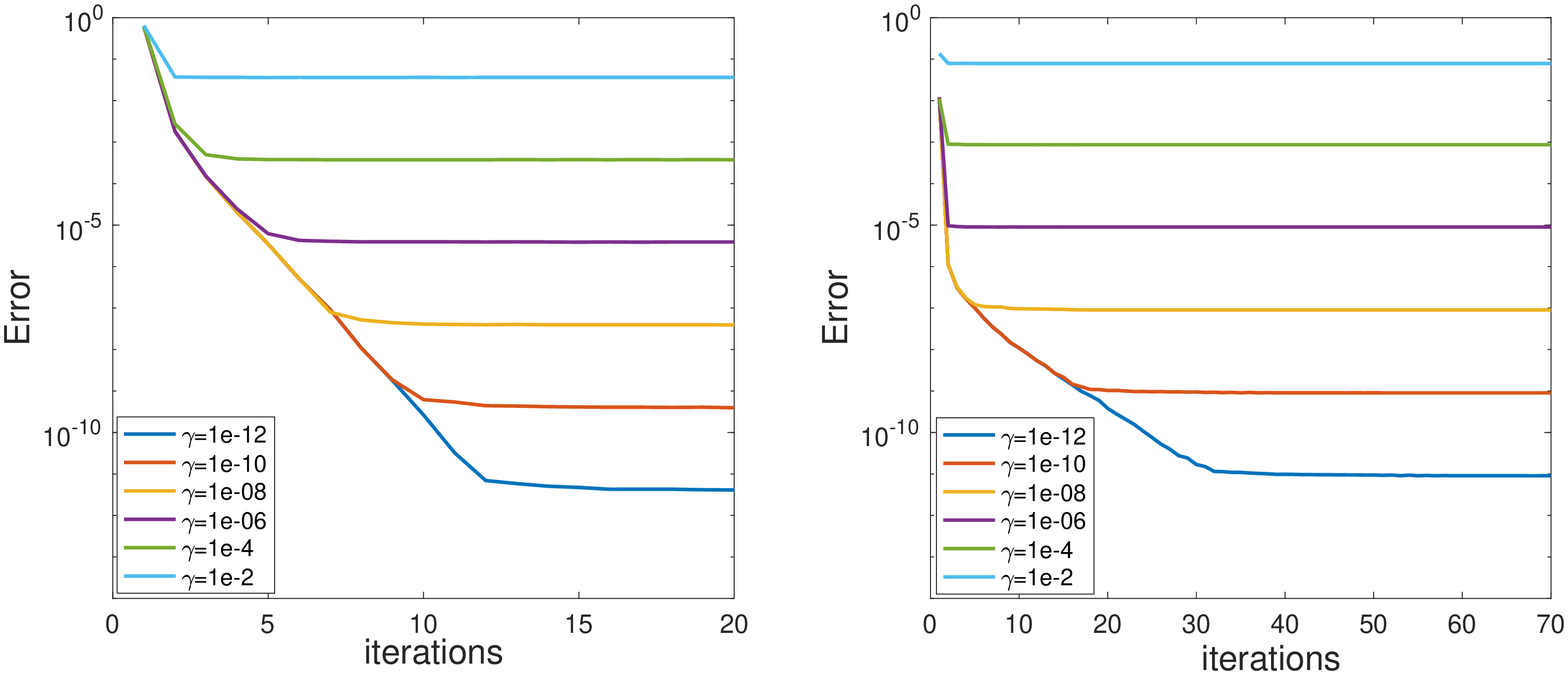}
\end{subfigure}
\caption{Error with respect to the truth for different observation noise level. On the left: Application to L63 with noisy observations, we observe only the first variable. On the right: Application to L96 with noisy observations, we observe every second variable.}
\label{fig6}
\end{figure}

\subsection{\textbf{Comparison to weak-constraint 4DVar}}
 Weak-constraint four-dimensional variational method is one of the well-known data-assimilation methods for estimating initial condition in weather forecasting applications. It minimizes the cost function (Equation (\ref{eq-wc})) under assumption of imperfect model dynamics which is also the goal of the Gauss-Newton DA method. We compare 
 Gauss-Newton DA method to WC4DVar method.
 We perform numerical experiments using the L63 and L96 models with the same parameters as in the previous section. In these experiments,  we use identical data, models, and windows for both methods.

In Figure \ref{fig7} and Figure \ref{fig8} we plot errors for the L63 and L96 models, respectively. On the left of the figures, we plot errors with respect to the truth of observed variables (Equation (\ref{eq45})). On the right of the figures, we plot errors with respect to the truth of non-observed variables (Equation (\ref{eq46})). We see that error of Gauss-Newton DA method is significantly less than the error of WC4DVar method for both observed variables and non-observed variables. We see that the error of both methods is below the observation error.

\begin{figure}[htpb]
    \centering
    \begin{subfigure}{1\textwidth}
    \includegraphics[width=\textwidth]{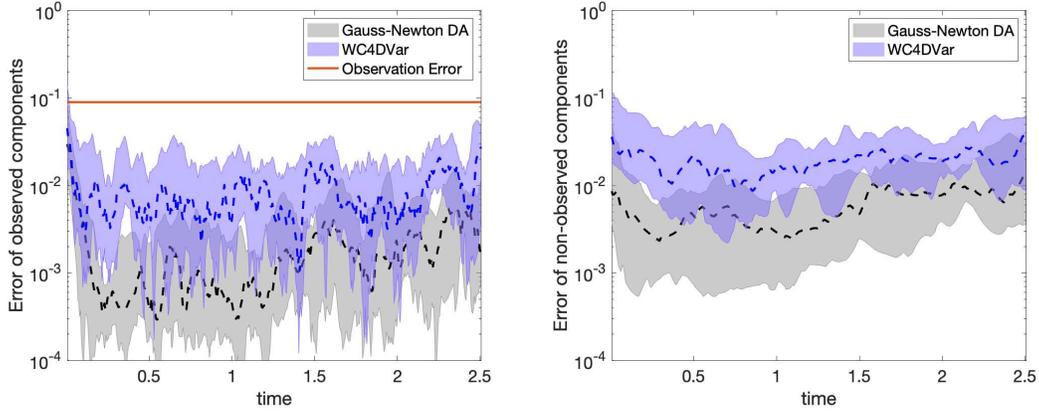}
    \label{fig:my_label}
    \end{subfigure}
     \caption{Application to L63. Error as a function of time: median (dashed line) +/- one standard deviation over 20 simulations with length of assimilation window, 2.5. On the left: error with respect to the truth of observed variables. On the right: error with respect to the truth of non-observed variables. The Gauss-Newton DA method is in grey, WC4DVar method is in blue, and the observational error is in red.}
     \label{fig7}
\end{figure}
\begin{figure}[htpb]
    \centering
    \begin{subfigure}{1\textwidth}
    \includegraphics[width=\textwidth]{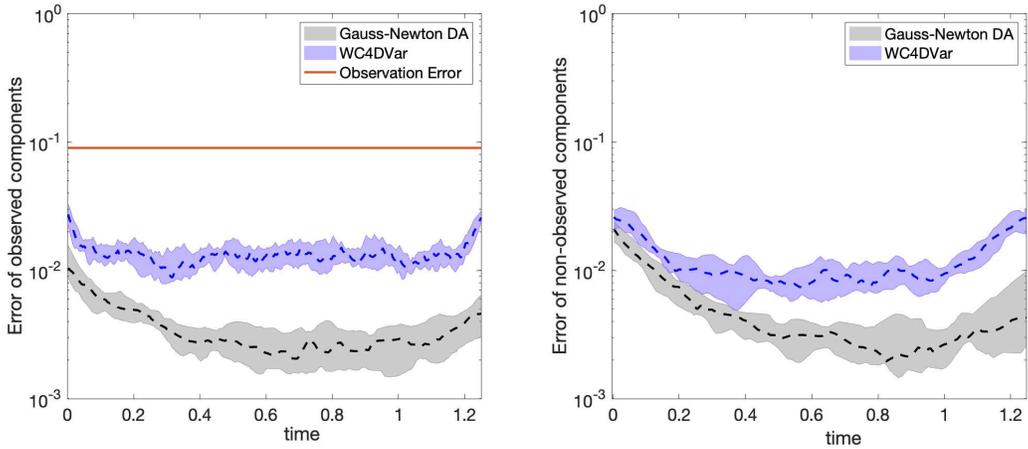}
    \end{subfigure}
     \caption{Application to L96. Error as a function of time: median (dashed line) +/- one standard deviation over 20 simulations with length of assimilation window, 1.25. On the left: error with respect to the truth of observed variables. On the right: error with respect to the truth of non-observed variables. The Gauss-Newton DA method is in grey, WC4DVar method is in blue, and the observational error is in red.}
     \label{fig8}
\end{figure}

\subsection{\textbf{Parameter estimation}}
As described in Section \ref{param}, the Gauss-Newton DA method can be applied to the problem of joint state-parameter estimation. In these experiments, we use the Gauss-Newton DA method (Equations (\ref{eq21}) and (\ref{eq23})) to estimate a parameter $\sigma$ of the L63 model which we assume to be uncertain. Different values of initial $\sigma$ were chosen, 5, 15, and 20, with the true $\sigma$ being 10. We alternate between Equations (\ref{eq21}) and (\ref{eq23}) until a termination condition is satisfied. The termination condition is satisfied when either the number of iterations reaches the maximum (500) or the distance between two successive approximations of the uncertain parameter is less than desired tolerance $(10^{-3})$. We consider both noisy and noise-free observations and assume the first and second components of the state are observed. For each numerical experiment and fixed initial $\sigma$, we perform 100 realizations. In Table \ref{Tab:1} and  \ref{Tab:2}, we display the error of state estimation, which is the median of Equation (\ref{eq44}) over 100 simulations, and estimated $\sigma$ for different initial parameters. Table  \ref{Tab:1} is for noise-free observations and Table  \ref{Tab:2} is for noisy observations with $\gamma=\sqrt{3}$.

In Table \ref{Tab:1}, we consider noise-free observations. We assume the first and second components of the state are observed. The results of $\sigma$ estimation are shown in Table \ref{Tab:1}. 

In Table \ref{Tab:2}, we consider noisy observations.
The observations of state are obtained from the true state by adding i.i.d zero mean Gaussian noise as in Equation (\ref{eq2}) with covariance matrix $\Gamma=\gamma^2 I$, where $\gamma^2$ is the variance of the noise procedure. In this numerical experiment, we consider noisy observations with $\gamma=\sqrt{3}$ in Equation (\ref{eq2}). 
 It should be noted that similar results can be obtained for $\rho$  or $\beta$  estimation of the L63 model (not shown).
\begin{table}[htbp]
\caption{Estimation of $\sigma$ by the Gauss-Newton DA method for noise-free observations, we observe  the first and second components of the state.}
\label{Tab:1}
\begin{center}
\begin{tabular}{|c|c|c|c|}
\hline
Property & \multicolumn{3}{p{5cm}|}{\centering Initial guess for $\sigma$} \\
\hline
Initial $\sigma$ & 5 & 15 & 20  \\ \hline
Error between estimated and true solution & $0.4025$ & $0.4256$ & $0.4267$ \\
Estimated $\sigma$ & 9.7465 &  10.2746 & 10.2785 \\
\hline
\end{tabular}
\end{center}
\end{table}
\begin{table}[htbp]
\caption{Estimation of $\sigma$ by Gauss-Newton DA method for noisy observations.}
\label{Tab:2}
\begin{center}
\begin{tabular}{|c|c|c|c|}
\hline
Property & \multicolumn{3}{p{5cm}|}{\centering Initial guess for $\sigma$} \\
\hline
Initial $\sigma$ & 5 & 15 & 20 \\ \hline
Observation error & \multicolumn{3}{c|}{3}\\ \hline 
Error between estimated and true solution & $2.7287$ & $6.7806$ & $3.8887$ \\
Estimated $\sigma$ & 9.7380 &  10.4505 & 10.8239 \\
\hline
\end{tabular}
\end{center}
\end{table}

\section{CONCLUSION}\label{conc}
We considered a cost function of a data-assimilation problem
consisting of observation mismatch, $\|H\mathbf{u}-\mathbf{y}\|$ weighted  by a parameter $\alpha$ and model mismatch $\|G(\mathbf{u})\|$. By solving this minimization problem approximately using an iterative algorithm, Gauss-Newton DA method, we found an estimate for a considered system. One of the advantage of this cost function is using the information of available observations in each iteration which can help to find a more accurate approximated solution for a physical system. 
We then established error bounds with respect to the truth under some conditions in two cases, noise-free and noisy observations. In case of noise-free observations, we proved the method is convergent and we obtained an upper bound in case of noisy observations.
In numerical experiments, we applied Gauss-Newton DA method for two models, Lorenz 63 and Lorenz 96. We observed the numerical evidence confirmed the theoretical results and we demonstrated that the results compare favorably to those of WC4DVar.
Moreover, we showed that how the size of observations could affect on the convergence results. We observed that if the size of observations is less than eight in the Lorenz 96 model, we do not have a convergence result. Therefore future directions include finding a method for deriving an approximation based on low-order of observations.
\begin{appendix}
\section{Lipschitz constants for the Lorenz 63 and the Lorenz 96 models}
\addcontentsline{toc}{section}{Appendices}
\renewcommand{\thesubsection}{A.\arabic{subsection}}
Here we compute the Lipschitz constant for the Lorenz 63 and the Lorenz 96 models discretized with forward Euler. 
\subsection{Lipschitz constant for the L63 model discretized with forward Euler}\label{app1}
Consider the discretized form of Equation \eqref{eql63} 
\begin{eqnarray}\label{disc_l63}
X_1(t_{k+1}) &=& X_1(t_k) + \Delta t \sigma (X_2(t_k)-X_1(t_k)),\\ \nonumber
X_2(t_{k+1}) &=& X_2(t_k) + \Delta t\left( X_1(t_k) (\rho -X_3(t_k)) - X_2(t_k)\right),\\ \nonumber
X_3(t_{k+1}) &=& X_3(t_k) +\Delta t \left(X_1(t_k) X_2(t_k) - bX_3(t_k)\right),\nonumber
\end{eqnarray}
with the following matrix notation
\[
\mathbf{X}_{k+1} =F(\mathbf{X}_k), \quad G(\mathbf{X}):= \mathbf{X}_{k+1} -F(\mathbf{X}_k)
\]
where $\mathbf{X}_k = \left(X_1(t_k),X_2(t_k),X_3(t_k) \right)^T$
and $F(\mathbf{X}_k)$ is the right-hand side of \eqref{disc_l63}. From the definition of Jacobian of $G$ we have
\begin{equation*}
G'(\mathbf{X}) -G'(\mathbf{Y}) = 
\begin{bmatrix}
-F'(\mathbf{X}_0)+F'(\mathbf{Y}_0)& &\\
 & -F'(\mathbf{X}_1)+F'(\mathbf{Y}_1) &  &\\
  & & \ddots &  & \\
& & & -F'(\mathbf{X}_{N-1})+F'(\mathbf{Y}_{N-1}) & \\
\end{bmatrix}.
\end{equation*}
with
\begin{equation}\label{eq36}
F'(\mathbf{X}_k) = \begin{pmatrix}
1-\sigma \Delta t & \sigma\Delta t& 0\\
\Delta t (\rho-X_3(t_k))& 1-\Delta t & - \Delta t X_1(t_k) \\
\Delta t X_2(t_k) & \Delta t X_1(t_k)& 1-b\Delta t\\
\end{pmatrix}.
\end{equation}
From (\ref{eq36}) we get
\begin{align}
\|F'(\mathbf{X}_k)-F'(\mathbf{Y}_k)\|^2_F = \sum_{j=1}^n\sum_{i=1}^m |F_{ij}|^2\ &=  \Delta t^2 \bigg( |Y_3(t_k)-X_3(t_k)|^2 +  |Y_1(t_k)-X_1(t_k)|^2 \notag\\
&+   |Y_2(t_k)-X_2(t_k)|^2 +  |Y_1(t_k)-X_1(t_k)|^2\bigg)\notag\\
& \le  2\Delta t^2 \|\mathbf{X}_k - \mathbf{Y}_k\|^2_{L^2}.
\end{align}
Therefore, we have
\begin{equation*}
\|G'(\mathbf{X}) -G'(\mathbf{Y})\|_F \le \sqrt{2} \Delta t \|\mathbf{X}-\mathbf{Y}\|_{L^2}.
\end{equation*}
Since $\|A\|_{L^2} \le \|A\|_F$, by substituting it into the expression above we obtain the following
\begin{equation*}
\|G'(\mathbf{X}) -G'(\mathbf{Y})\|_{L^2} \le \sqrt{2} \Delta t \|\mathbf{X}-\mathbf{Y}\|_{L^2}.
\end{equation*}

\subsection{Lipschitz constant for the L96 model discretized with forward Euler}\label{app2}
Consider the discretized form of Equation \eqref{L96} 
\begin{equation}\label{disc_l96}
  X_l (t_{k+1}) = X_l(t_k) + \left(-X_{l-2}(t_k) X_{l-1}(t_k) + X_{l-1}(t_k)X_{l+1}(t_k)-X_l(t_k)+\mathcal{F}\right)\Delta t,\ l=1,\dots,40.  
\end{equation}
with the following matrix notation
\[\mathbf{X}_{k+1} =F(\mathbf{X}_k), \qquad G(\mathbf{X}):= \mathbf{X}_{k+1} -F(\mathbf{X}_k)\]
where $\mathbf{X}_{k} = \left(X_1(t_{k}),\dots,X_{40}(t_{k}) \right)^T$
 and $F(\mathbf{X}_k)$ is the right-hand side of \eqref{disc_l96}.
From the definition of Jacobian of $G$ we have
\begin{equation*}
G'(\mathbf{X}) -G'(\mathbf{Y}) = 
\begin{bmatrix}
-F'(\mathbf{X}_0)+F'(\mathbf{Y}_0)& &\\
 & -F'(\mathbf{X}_1)+F'(\mathbf{Y}_1) &  &\\
  & & \ddots &  & \\
& & & -F'(\mathbf{X}_{N-1})+F'(\mathbf{Y}_{N-1}) & \\
\end{bmatrix}.
\end{equation*}
with
 \begin{equation*}
F'(\mathbf{X}_k) = \begin{pmatrix}
1-\Delta t & \Delta t X_{40} & 0& \dots & 0 & -\Delta t X_{40} & \Delta t (X_2-X_{39})\\
\Delta t (X_3-X_{40}) & 1-\Delta t & \Delta t X_1 & 0 &\dots &0 &-\Delta t X_1 \\
\vdots &\dots & &\dots & & &\vdots \\
0 &\dots&0&-\Delta X_{38}&\Delta t (X_{40}-X_{37})& 1-\Delta t& \Delta t X_{38}\\
\Delta t X_{39} & 0 &\dots &0&-\Delta t X_{39} & \Delta t (X_1-X_{38}) & 1-\Delta t\\
\end{pmatrix}.
\end{equation*}
From the expression above we get
\begin{align*}
\|F'(\mathbf{X}_k)-F'(\mathbf{Y}_k)\|_F^2 &= \Delta t^2 |X_{40}-Y_{40}|^2 + \Delta t^2 |X_{40}-Y_{40}|^2 \notag\\
&+ \Delta t^2 |(X_2-Y_2)-(X_{39}-Y_{39})|^2 \notag\\
&+ \Delta t^2 |(X_3-Y_3)-(X_{40}-Y_{40})|^2  \notag\\
&+ \Delta t^2 |X_1-Y_1|^2 + \Delta t^2 |X_1-Y_1|^2 \notag\\
&+ ... \notag\\
&+ \Delta t^2 |X_{38}-Y_{38}|^2 + \Delta t^2 |(X_{40}-Y_{40})-(X_{37}-Y_{37})|^2  \notag\\
&+\Delta t^2 |X_{38}-Y_{38}|^2 + \Delta t^2 |X_{39}-Y_{39}|^2\notag\\
&+ \Delta t^2 |X_{39}-Y_{39}|^2 +\Delta t^2 |(X_{1}-Y_{1})-(X_{38}-Y_{38})|^2\notag\\
&\le 2\Delta t^2 \|\mathbf{X}_k - \mathbf{Y}_k\|^2_{L^2} \notag\\
&+ \{2\Delta t^2 |X_2-Y_2|^2 +2\Delta t^2 |X_{39}-Y_{39}|^2 +2\Delta t^2 |X_3-Y_3|^2\notag\\
 &+ 2\Delta t^2 |X_{40}-Y_{40}|^2 +...+2\Delta t^2 |X_{40}-Y_{40}|^2 \notag\\
 &+ 2\Delta t^2 |X_{37}-Y_{37}|^2 + 2\Delta t^2 |X_{1}-Y_{1}|^2 + 2\Delta t^2 |X_{38}-Y_{38}|^2\}\notag\\
 &\le 2\Delta t^2 \|\mathbf{X}_k - \mathbf{Y}_k\|^2_{L^2} + 2\Delta t^2 (2\|\mathbf{X}_k - \mathbf{Y}_k\|^2_{L^2})\notag\\
 &= 6 \Delta t^2 \|\mathbf{X}_k - \mathbf{Y}_k\|^2_{L^2}.
\end{align*}
On the other hand,
\begin{equation*}
\|G'(\mathbf{X}_k) -G'(\mathbf{Y}_k)\|^2_{L^2} \le\|G'(\mathbf{X}_k) -G'(\mathbf{Y}_k)\|^2_F = \sum_{j=1}^n\sum_{i=1}^m |F_{ij}|^2. 
\end{equation*}
Combining the two above expressions leads to
\[ 
\|G'(\mathbf{X}) -G'(\mathbf{Y})\|_{L^2} \le \sqrt{6} \Delta t \|\mathbf{X} - \mathbf{Y}\|_{L^2}.
\]
\section{Bound on the parameter estimation error}\label{proof_th}
Here we prove Theorem (\ref{th_par}).

\begin{proof}
By setting $G(\mathbf{u}^{(k)};\bm{\theta}^{(k-1)})= \mathcal{G}(\mathbf{u}^{(k)})+A\bm\theta^{(k-1)}$ in Equation (\ref{eq23}), we get
\begin{equation*}
    \bm\theta^{(k)} = \bm\theta^{(k-1)} - (A^{T}A)^{-1} A^{T}\left( \mathcal{G}(\mathbf{u}^{(k)})+A \bm\theta^{(k-1)}\right)
\end{equation*}
Let us define $\mathbf{E}_{k} = \bm\theta^{(k)}-\bm\theta^\dagger$, then we have
\begin{align*}
    \mathbf{E}_{k} &= \mathbf{E}_{k-1} - (A^{T}A)^{-1} A^T\left(  \mathcal{G}(\mathbf{u}^{(k)})+A \bm\theta^{(k-1)}\right)\notag\\
             &= \mathbf{E}_{k-1}  - (A^{T}A)^{-1}A^T \left( \mathcal{G}(\mathbf{u}^{(k)})+A \bm\theta^{(k-1)} -  \mathcal{G}(\mathbf{u}^{\dagger})-A\bm\theta^{\dagger} \right),
\end{align*}
where we used $ G(\mathbf{u}^{\dagger};\bm\theta^{\dagger})=\mathcal{G}(\mathbf{u}^{\dagger})+A \bm\theta^{\dagger}=0$. 
Therefore, we conclude that
\begin{equation}\label{eq33}
    \mathbf{E}_{k} = -(A^{T}A)^{-1} A^T \left( \mathcal{G}(\mathbf{u}^{(k)})-\mathcal{G}(\mathbf{u}^{\dagger})\right).
\end{equation}
On the other hand, using Equation (\ref{eq21}) in the definition of the state error $\mathbf{e}_{k}=\mathbf{u}^{k}-\mathbf{u}^{\dagger}$ leads to
\begin{align*}
\mathbf{e}_{k+1} = \mathbf{e}_{k} - \left( \mathcal{G}'^{T} \mathcal{G}'+\alpha H^T H\right)^{-1} \left( \mathcal{G}'^{T} \left(\mathcal{G}(\mathbf{u}^{(k)})+  A \bm\theta^{(k)}\right) +\alpha H^T  (H\mathbf{u}^{(k)}-\mathbf{y})\right),
\end{align*}
where $\mathcal{G}'$ and $\mathcal{G}'^{T}$ are computed at $\mathbf{u}^{(k)}$. By substituting Equation (\ref{eq2}) with $\eta_j=0, \ \forall  j=0,\cdots,N$ into the expression above, we obtain the following
\begin{eqnarray*}
\mathbf{e}_{k+1} &=& \mathbf{e}_k - \left( \mathcal{G}'^{T} \mathcal{G}'+\alpha H^T H\right)^{-1} \left( \mathcal{G}'^{T} \left(\mathcal{G}(\mathbf{u}^{(k)})+A \bm\theta^{(k)}\right) +\alpha H^T H \mathbf{e}_k  \right)\\\nonumber
&=& \mathbf{e}_k - \left( \mathcal{G}'^{T} \mathcal{G}'+\alpha H^T H\right)^{-1} \left( \mathcal{G}'^{T} \left(\mathcal{G}(\mathbf{u}^{(k)})+A \bm\theta^{(k)} - \mathcal{G}(\mathbf{u}^{\dagger}) - A\bm\theta^{\dagger}\right) +\alpha H^T H \mathbf{e}_k  \right),\nonumber
\end{eqnarray*}
where we used $\mathcal{G}(\mathbf{u}^{\dagger})+A\bm\theta^{\dagger}=0$.
Opening the brackets and using the following property 
\begin{eqnarray*}
I-(\mathcal{G}^{'T}\mathcal{G}'+\alpha H^T H)^{-1}\alpha H^TH = (\mathcal{G}^{'T}\mathcal{G}'+\alpha H^TH)^{-1}\mathcal{G}^{'T}\mathcal{G}',
\end{eqnarray*}
we get
\begin{equation*}\label{eq_37}
    \mathbf{e}_{k+1} = \left( \mathcal{G}'^{T}\mathcal{G}'+\alpha H^T H\right)^{-1}\mathcal{G}'^{T}\left( \mathcal{G}' \mathbf{e}_k - \mathcal{G}(\mathbf{u}^{(k)})+\mathcal{G}(\mathbf{u}^\dagger) - A E_k\right).
\end{equation*}
Substituting Equation~\eqref{eq33} into the expression above leads to 
\begin{equation*}
	\mathbf{e}_{k+1} 
	=(\mathcal{G}^{'T}\mathcal{G}'+\alpha H^T H)^{-1} \mathcal{G}^{'T} 
 \left[
 \mathcal{G}' \mathbf{e}_k - \mathcal{G}(\mathbf{u}^{(k)})+\mathcal{G}(\mathbf{u}^\dagger) + A(A^{T}A)^{-1} A^T \left( \mathcal{G}(\mathbf{u}^{(k)})-\mathcal{G}(\mathbf{u}^{\dagger})\right)\right].
\end{equation*}
Taking norm of both sides, using assumptions \eqref{assth5_0}--\eqref{assth5_1} and Lemma \ref{lemma1}, we get
\begin{equation*}\label{eq_39}
    \|\mathbf{e}_{k+1}\| \le \frac{1}{2L_3c}\left(\frac{L_3}{2}\|\mathbf{e}_{k}\|^2 + L_0\|A(A^{T}A)^{-1} A^T\|\|\mathbf{e}_{k}\| \right),
\end{equation*}
which we rewrite into the following expression
\begin{equation*}\label{eq_39}
    \|\mathbf{e}_{k+1}\| \le \frac{1}{4c}\left(\|\mathbf{e}_{k}\| + b \right)^2\le\frac{1}{2c}\|\mathbf{e}_{k}\|^2 + \frac{1}{2c}b^2,
\end{equation*}
where $b = \|A(A^{T}A)^{-1} A^T\|L_0/L_3$. Next, we recursively obtain the following inequality
\[
    \|\mathbf{e}_{k}\|\le 2^{-k} \left( \frac{1}{c} \right)^{2^k-1}\|\mathbf{e}_{0}\|^{2^k} + \frac{c}{2} \sum_{i=1}^k \left(\frac{b}{c}\right)^{2^i},\quad \mbox{for}\quad k=1,2,\dots
\]
Since by assumption $b/c<1$ and $\|e_0\|<c$, we have that
\[
    \|\mathbf{e}_{k}\| < 2^{-k} c+ \frac{b}{2}\sum_{i=0}^k \left(\frac{b}{c}\right)^{i},
\]
and in the limit of $k$ goes to infinity 
\[
   \limsup_{k\to\infty} \|\mathbf{e}_{k}\| < \frac{b/2}{1-b/c}.
\]
Furthermore, taking norm of Equation~\eqref{eq33} leads to 
\begin{equation*}
    \|\mathbf{E}_{k}\|\le L_0\|(A^{T}A)^{-1} A^T\| \|\mathbf{e}_{k}\|.
\end{equation*}
By taking limit of both sides of the above inequality, as $k\to\infty$, we conclude that
\begin{align*}
\limsup_{k\to\infty}\|\mathbf{E}_k\| < L_0\|(A^{T}A)^{-1} A^T\|  \frac{b/2}{1-b/c}.
\end{align*}
\end{proof}
\end{appendix}
\bibliography{bibliography1}

\begin{thebibliography}{10}
\expandafter\ifx\csname url\endcsname\relax
  \def\url#1{\texttt{#1}}\fi
\expandafter\ifx\csname urlprefix\endcsname\relax\def\urlprefix{URL }\fi
\expandafter\ifx\csname href\endcsname\relax
  \def\href#1#2{#2} \def\path#1{#1}\fi

\bibitem{Jaz70}
A.~H. Jazwinski, Stochastic processes and filtering theory, Mathematics in
  science and engineering, Academic press, New York, 1970, uKM.

\bibitem{lewis1985use}
J.~M. Lewis, J.~C. Derber, The use of adjoint equations to solve a variational
  adjustment problem with advective constraints, Tellus A (1985).

\bibitem{talagrand1997assimilation}
O.~Talagrand, Assimilation of observations, an introduction (gtspecial
  issueltdata assimilation in meteology and oceanography: Theory and practice),
  Journal of the Meteorological Society of Japan. Ser. II (1997).

\bibitem{talagrand1987variational}
O.~Talagrand, P.~Courtier, Variational assimilation of meteorological
  observations with the adjoint vorticity equation. i: Theory, Quarterly
  Journal of the Royal Meteorological Society (1987).

\bibitem{sasaki1970some}
Y.~Sasaki, Some basic formalisms in numerical variational analysis, Monthly
  Weather Review (1970).

\bibitem{tr2006accounting}
Y.~Tr'emolet, Accounting for an imperfect model in 4d-var, Quarterly Journal of
  the Royal Meteorological Society: A journal of the atmospheric sciences,
  applied meteorology and physical oceanography 132~(621) (2006) 2483--2504.

\bibitem{tremolet2007model}
Y.~Tr{\'e}molet, Model-error estimation in 4d-var, Quarterly Journal of the
  Royal Meteorological Society: A journal of the atmospheric sciences, applied
  meteorology and physical oceanography 133~(626) (2007) 1267--1280.

\bibitem{johnson2005very}
C.~Johnson, N.~Nichols, B.~Hoskins, Very large inverse problems in atmosphere
  and ocean modelling, International journal for numerical methods in fluids
  47~(8-9) (2005) 759--771.

\bibitem{lorenc2000met}
A.~Lorenc, S.~Ballard, R.~Bell, N.~Ingleby, P.~Andrews, D.~Barker, J.~Bray,
  A.~Clayton, T.~Dalby, D.~Li, et~al., The met. office global three-dimensional
  variational data assimilation scheme, Quarterly Journal of the Royal
  Meteorological Society 126~(570) (2000) 2991--3012.

\bibitem{nichols2003data}
N.~Nichols, Data assimilation: Aims and basic concepts, in: Data Assimilation
  for the Earth System, Springer, 2003, pp. 9--20.

\bibitem{engl2000inverse}
H.~W. Engl, Inverse problems and their regularization, Computational
  mathematics driven by industrial problems (2000).

\bibitem{kaipio2006statistical}
J.~Kaipio, E.~Somersalo, Statistical and computational inverse problems,
  Springer Science \& Business Media, 2006.

\bibitem{brett2013accuracy}
C.~E. Brett, K.~F. Lam, K.~Law, D.~McCormick, M.~R. Scott, A.~Stuart, Accuracy
  and stability of filters for dissipative pdes, Physica D: Nonlinear Phenomena
  245~(1) (2013) 34--45.

\bibitem{hayden2011discrete}
K.~Hayden, E.~Olson, E.~S. Titi, Discrete data assimilation in the lorenz and
  2d navier--stokes equations, Physica D: Nonlinear Phenomena 240~(18) (2011)
  1416--1425.

\bibitem{MLPL2013}
A.~Moodey, A.~Lawless, R.~Potthast, P.~Van~Leeuwen, Nonlinear error dynamics
  for cycled data assimilation methods, Inverse Problems 29~(2) (2013) 25002.
\newblock \href
  {https://doi.org/https://dx.doi.org/10.1088/0266-5611/29/2/025002}
  {\path{doi:https://dx.doi.org/10.1088/0266-5611/29/2/025002}}.

\bibitem{gratton2007approximate}
S.~Gratton, A.~S. Lawless, N.~K. Nichols, Approximate gauss--newton methods for
  nonlinear least squares problems, SIAM Journal on Optimization 18~(1) (2007)
  106--132.

\bibitem{cartis2021convergent}
C.~Cartis, M.~H. Kaouri, A.~S. Lawless, N.~K. Nichols, Convergent least-squares
  optimisation methods for variational data assimilation, arXiv preprint
  arXiv:2107.12361 (2021).

\bibitem{courtier1994strategy}
P.~Courtier, J.-N. Th{\'e}paut, A.~Hollingsworth, A strategy for operational
  implementation of 4d-var, using an incremental approach, Quarterly Journal of
  the Royal Meteorological Society 120~(519) (1994) 1367--1387.

\bibitem{lawless2005investigation}
A.~Lawless, S.~Gratton, N.~Nichols, An investigation of incremental 4d-var
  using non-tangent linear models, Quarterly Journal of the Royal
  Meteorological Society: A journal of the atmospheric sciences, applied
  meteorology and physical oceanography 131~(606) (2005) 459--476.

\bibitem{lawless2005approximate}
A.~Lawless, S.~Gratton, N.~Nichols, Approximate iterative methods for
  variational data assimilation, International journal for numerical methods in
  fluids 47~(10-11) (2005) 1129--1135.

\bibitem{dennis1996numerical}
J.~E. Dennis~Jr, R.~B. Schnabel, Numerical methods for unconstrained
  optimization and nonlinear equations, SIAM, 1996.

\bibitem{Lorenz63}
E.~N. Lorenz, {Deterministic Nonperiodic Flow}, Journal of Atmospheric Sciences
  20 (1963) 130--148.
\newblock \href {https://doi.org/10.1175/1520-0469(1963)020<0130:DNF>2.0.CO;2}
  {\path{doi:10.1175/1520-0469(1963)020<0130:DNF>2.0.CO;2}}.

\bibitem{lorenz1996predictability}
E.~N. Lorenz, Predictability - a problem partly solved, in: T.~Palmer,
  R.~Hagedorn (Eds.), Proceedings of seminar on Predictability, Vol.~1, ECMWF,
  Cambridge University Press, Reading, UK, 1996, pp. 1--18.

\end{thebibliography}

\end{document}